\theoremstyle{plain}
\newtheorem{theorem}{Theorem}[section]
\newtheorem{lemma}[theorem]{Lemma}
\newtheorem{corollary}[theorem]{Corollary}
\theoremstyle{remark}
\newtheorem*{remark}{Remark}
\newcommand{\GL}{{\mathrm {GL}}}
\newcommand{\PGL}{{\mathrm {PGL}}}
\newcommand{\SL}{{\mathrm {SL}}}
\newcommand{\PSL}{{\mathrm {PSL}}}
\newcommand{\GU}{{\mathrm {GU}}}
\newcommand{\PSU}{{\mathrm {PSU}}}
\newcommand{\PO}{{\mathrm {P}\Omega}}
\newcommand{\Sp}{{\mathrm {Sp}}}
\newcommand{\PSp}{{\mathrm {PSp}}}
\newcommand{\Aut}{{\mathrm {Aut}}}
\newcommand{\Irr}{{\mathrm {Irr}}}
\newcommand{\St}{{\mathrm {St}}}
\newcommand{\Stab}{{\mathrm {Stab}}}
\newcommand{\Gal}{{\it Gal}}
\newcommand{\NN}{{\mathbb N}}
\newcommand{\FF}{{\mathbb F}}
\newcommand{\bZ}{\mathbf{Z}}
\newcommand{\bC}{{\mathbf{C}}}
\newcommand{\bN}{\mathbf{N}}
\newcommand{\bF}{\mathbf{F}}
\newcommand{\Al}{\textup{\textsf{A}}}
\newcommand{\Sy}{\textup{\textsf{S}}}
\begin{document}
\title[Finite groups with a character of large degree]
{Finite groups with an irreducible\\ character of large degree}

\thanks{Nguyen N. Hung is partially supported by the NSA Young
  Investigator Grant \#H98230-14-1-0293 and a Faculty Scholarship
  Award from the Buchtel College of Arts and Sciences, The University
  of Akron}

\author[N.\,N. Hung]{Nguyen Ngoc Hung}
\address{Department of Mathematics, The University of Akron, Akron,
Ohio 44325, USA} \email{hungnguyen@uakron.edu}

\author[M.\,L. Lewis]{Mark L. Lewis}
\address{Department of Mathematical Sciences, Kent State University, Kent,
OH 44242, USA} \email{lewis@math.kent.edu}

\author[A.\,A. Schaeffer Fry]{Amanda A. Schaeffer Fry}
\address{Department of Mathematical and Computer Sciences, Metropolitan State University of Denver, Denver, CO 80217, USA} \email{aschaef6@msudenver.edu}

\subjclass[2010]{Primary 20C15; Secondary 20C30, 20C33, 20C34}

\keywords{Finite group, character degree, largest degree, groups of
Lie type, alternating groups}

\date{\today}

\begin{abstract} Let $G$ be a finite group and $d$ the degree of a complex irreducible character of $G$, then write $|G|=d(d+e)$ where
$e$ is a nonnegative integer. We prove that $|G|\leq e^4-e^3$
whenever $e>1$. This bound is best possible and improves on several
earlier related results.
\end{abstract}

\maketitle


\section{Introduction}

Let $d$ be the degree of a complex irreducible character of a finite
group $G$. Since $d$ divides $|G|$ and $d^2 \leq |G|$, one can write
$|G| = d(d+e)$ for some nonnegative integer $e$. It is clear that
the largest possible value of $d$ is $\sqrt {|G|}$ and $d = \sqrt
{|G|}$ if and only if $G$ is trivial.

The extremal situations where $d$ is close to $\sqrt {|G|}$ or
equivalently $e$ is small have been studied considerably in the
literature. In \cite{Berkovich}, Y.~Berkovich showed that $e = 1$ if
and only if $G$ is either a cyclic group of order $2$ or a
$2$-transitive Frobenius group. Going further, N.~Snyder
\cite{Snyder} classified the finite groups with $e = 2$ or $3$, and
as a consequence of his classification, $|G| \leq 8$ when $e = 2$
and $|G| \leq 54$ when $e = 3$. This naturally leads Snyder to the
observation that $|G|$ is bounded in terms of $e$ whenever $e>1$
and, indeed, he managed to prove that $|G|\leq ((2e)!)^2$.

Finding the best bound for $|G|$ in terms of $e$ has become a
problem of interest in many recent papers. I.\,M.~Isaacs
\cite{Isaacs2} was the first to improve Snyder's factorial bound to
a polynomial one of the form $Be^6$ where $B$ is a large enough
constant. However his proof relied on a result of M.~Larsen,
G.~Malle, and P.\,H.~Tiep \cite[Theorem~1.1]{Larsen-Malle-Tiep} on
bounding the largest irreducible character degree in terms of
smaller degrees in a simple group, which in turn replies on the
classification of finite simple groups. Later on, C.~Durfee and
S.~Jensen \cite{Durfee-Jensen} were able to obtain the bound of $e^6
- e^4$ without using the classification. This bound was further
improved to $e^4 + e^3$ by the second author in \cite{Lewis}.

In \cite{Isaacs2}, Isaacs pointed out that the group of $3\times 3$
matrices of the form
\[ \left( \begin{array}{ccc}
1 & x & y \\
0 & 1 & z \\
0 & 0 & t \end{array} \right),\] where $x,y,z,t$ are elements in a
field of order $q$ and $t\neq 0$, has order $q^3(q-1)$ and an
irreducible character of degree $q(q-1)$. These groups show that the
best possible bound one can achieve is $e^4 - e^3$ and, in fact,
this bound holds when $G$ has a nontrivial abelian normal subgroup,
as shown in \cite[Theorem~1]{Lewis}. We note that these groups had
earlier appeared in \cite[p.~383]{Gagola} in a slightly different
context.

The aim of the present paper is to prove the optimal bound of $e^4 -
e^3$ for arbitrary finite groups.

\begin{theorem}\label{theorem-main-2} Let $|G| = d(d+e)$ where $e > 1$ and $d$ is the degree of some
irreducible character of $G$. Then $|G| \leq e^4 - e^3$.
\end{theorem}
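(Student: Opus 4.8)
The plan is to first recast the desired inequality in a form that isolates exactly what must be shown about $\chi(1)=d$. Since $|G|=d(d+e)$, the bound $|G|\leq e^4-e^3$ is equivalent to the quadratic inequality $d^2+ed-(e^4-e^3)\leq 0$ in the variable $d$. Its discriminant is $e^2+4(e^4-e^3)=(2e^2-e)^2$, so the relevant (positive) root is precisely
\[ \frac{-e+(2e^2-e)}{2}=e^2-e. \]
Hence Theorem~\ref{theorem-main-2} is equivalent to the single degree inequality $d\leq e^2-e=e(e-1)$, where $e=|G|/d-d$. I would prove this reformulated statement instead.

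First I would dispose of the case in which $G$ has a nontrivial abelian normal subgroup. If the Fitting subgroup $\bF(G)\neq 1$, then $\bZ(\bF(G))$ is a nontrivial abelian normal subgroup of $G$, and the bound $|G|\leq e^4-e^3$ follows directly from \cite[Theorem~1]{Lewis}; the metabelian examples in the introduction show this case is already sharp. Thus I may assume $\bF(G)=1$. In this reduced situation the generalized Fitting subgroup coincides with the socle, so $\soc(G)=S_1\times\cdots\times S_k$ is a direct product of nonabelian simple groups, $C_G(\soc(G))=1$, and therefore $\soc(G)\leq G\leq\Aut(\soc(G))$ with $G$ permuting the factors $S_i$.

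The heart of the argument is then a uniform upper bound for the largest irreducible character degree $b(G)$ in this reduced situation: I would establish that there is an explicit constant $\delta<1$ with $b(G)^2\leq\delta\,|G|$ whenever $\bF(G)=1$. Granting this, and since $d\leq b(G)$, we obtain $d^2\leq\delta\,|G|=\delta\,d(d+e)$, whence $e\geq(\tfrac1\delta-1)\,d$. As $c:=\tfrac1\delta-1$ is a fixed positive constant, the inequality $e(e-1)\geq d$ then holds for every $d$ exceeding the absolute bound $(1+c)/c^2$. For the remaining values $d$ is bounded, and in that range either $e$ is large enough that $e(e-1)\geq d$ holds trivially, or $|G|=d(d+e)$ itself is bounded and the inequality can be verified by a direct check over groups of bounded order. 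This would yield $d\leq e^2-e$ and complete the proof.

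The main obstacle is establishing the uniform bound $b(G)^2\leq\delta\,|G|$ in the reduced case, and this is where the classification of finite simple groups enters. For a single simple group $S$ one needs $b(S)^2/|S|$ bounded away from $1$: the alternating and sporadic groups are handled by explicit degree data, while the groups of Lie type require estimates on the largest unipotent and semisimple character degrees relative to $|S|$. The delicate points are (i) passing from $S$ to an almost simple group $G$ with $\soc(G)=S$, where extending or inducing over $\Out(S)$ can inflate degrees, and (ii) passing to products $S^n$ with $G$ permuting the factors, where the wreath-type action contributes an additional factor of $b(G/\soc(G))$; in each case one must verify that the gain coming from the simple factors dominates the loss coming from $G/\soc(G)$. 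I expect the smallest simple groups, such as $\Al_5$ where $b(S)^2/|S|=25/60$ is largest, to govern the optimal value of $\delta$, and the bookkeeping needed to make $\delta$ explicit and uniform across all these families to be the most technical part of the proof.
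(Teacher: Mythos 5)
Your reformulation of the theorem as the degree bound $d \leq e^2 - e$ is correct and is in fact the calibration the paper itself uses, and your reduction to the case $\bF(G)=1$ via \cite[Theorem~1]{Lewis} matches the paper's first step exactly. The problem is the step you call the ``heart of the argument'': the claim that there is a \emph{uniform} constant $\delta < 1$ with $b(G)^2 \leq \delta\,|G|$ whenever $\bF(G)=1$. This claim is not established by your sketch, and it is essentially an open problem that is strictly stronger than the theorem you are trying to prove. The paper proves exactly this kind of statement \emph{only for simple groups} (Theorem~\ref{theorem-nonabelian-simple-groups}: $|S| > 2b(S)^2$), and explicitly remarks that for general groups with a non-abelian minimal normal subgroup it knows of no counterexamples to $|G| > 2b(G)^2$ but cannot prove it --- precisely because of the two passage problems you identify (almost simple groups, and wreath-type actions on $S^k$), which the authors could not resolve at the level of a constant-factor bound. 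So your ``main obstacle'' is not a technical bookkeeping issue; it is the place where the proof strategy breaks down.

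Note also that your linear bound $e \geq c\,d$ drastically overshoots what the reformulation $d \leq e^2 - e$ actually requires: one only needs $e > \sqrt{d}+1$, i.e.\ $|G| > d^2 + d^{3/2} + d$ at $d = b(G)$, which grows like $b(G)^{3/2}$ rather than like $b(G)^2$ beyond the main term. This weaker, non-uniform target is what the paper actually proves (Theorems~\ref{theorem-S-ncongPSL(2,q)}, \ref{theorem-S-congPSL(2,q)q-even}, \ref{theorem-S-congPSL(2,q)1}, \ref{theorem-S-congPSL(2,q)2}): for a minimal normal subgroup $N = S^k$, one produces an irreducible character $\theta$ of $S$ extendible to $\Aut(S)$ with $\theta(1) > |S|^{3/8}$ (Theorem~\ref{theorem3/8}, when $S \ncong \PSL_2(q)$), extends $\theta \times \cdots \times \theta$ to $G$, and uses Gallagher's and Clifford's theorems to show that either $b(G) \leq b(N)b(G/N)$ (handled by Lemma~\ref{lemma-N-G/N}) or the characters above $\theta\times\cdots\times\theta$ contribute enough to $|G| - b(G)^2 = b(G)e$ to force $e > \sqrt{b(G)}+1$; the case $S \cong \PSL_2(q)$ needs separate delicate arguments. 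If you replace your uniform-$\delta$ claim with this square-root-sized target, your skeleton becomes workable, but then you are reconstructing the paper's proof rather than giving an alternative one.
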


In light of \cite{Lewis}, to prove Theorem~\ref{theorem-main-2} it
suffices to assume that $G$ has a trivial solvable radical. Indeed,
we can do a bit more.

\begin{theorem}\label{theorem-main} Let $|G| = d(d+e)$ where $d$ is the degree of some
irreducible character of $G$. If $G$ has a non-abelian minimal
normal subgroup, then $|G| < e^4 - e^3$.
\end{theorem}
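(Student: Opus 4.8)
The plan is to reduce the statement to a clean inequality and then exploit the structure of the non-abelian minimal normal subgroup. Writing $d=\chi(1)$ and setting $Q:=|G|-d^2$, so that $e=Q/d$, a direct manipulation shows that $|G|<e^4-e^3$ is equivalent to $d<e^2-e$, and, clearing denominators, to the symmetric inequality $Q^2>d\,|G|$. Since $Q^2=(|G|-d^2)^2$ strictly decreases and $d\,|G|$ strictly increases as $d$ grows on $(0,\sqrt{|G|})$, this inequality is hardest for the largest degree, so I would first reduce to the case where $d=b(G)$ is the maximal irreducible character degree. The target then becomes the assertion that $Q=\sum_{\psi\in\Irr(G),\,\psi\neq\chi}\psi(1)^2$ satisfies $Q^2>d\,|G|$, i.e. roughly $Q>d^{3/2}$: the degree-square mass of $G$ lying outside $\chi$ must be substantial.

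Let $N=S_1\times\cdots\times S_k$ be the non-abelian minimal normal subgroup, each $S_i\cong S$ a non-abelian simple group, and put $C=\bC_G(N)$, so that $N\cap C=1$ and $G/C$ embeds in $\Aut(N)$. I would dispose first of the easy case $N\le\ker\chi$: here $\chi$ inflates from $G/N$, so $d^2\le|G|/|N|$ and hence $Q\ge|G|(1-1/|N|)$, which, using $|N|\ge 60$, dwarfs $d^{3/2}\le(|G|/|N|)^{3/4}$ and gives the conclusion with room to spare. Thus I may assume $N\not\le\ker\chi$.

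For the main case I would run Clifford theory over $N$. Choosing $\theta=\theta_1\times\cdots\times\theta_k\in\Irr(N)$ under $\chi$, with inertia group $T=I_G(\theta)$, one has $CN\le T$ (as $C$ fixes every character of $N$) and $d=[G:T]\,\psi(1)$ for the Clifford correspondent $\psi\in\Irr(T\mid\theta)$. To bound $Q$ below I would combine two sources of mass: the irreducible characters of $G/N$, which contribute exactly $|G/N|=|G|/|N|$ to $Q$ and none of which is $\chi$; and the remaining characters in the $G$-orbit of $\theta$, for which the exact identity $\sum_{\psi'\in\Irr(G\mid\theta)}\psi'(1)^2=[G:T]^2[T:N]\theta(1)^2$ is available. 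The arithmetic engine is the comparison of $\theta(1)\le b(S)^k$ with $|N|=|S|^k$: for a non-abelian simple group $b(S)^2$ is bounded well away from $|S|$, and $S$ has many irreducible characters, so the mass $Q$ cannot be concentrated in $\chi$. Propagating these simple-group estimates through the inertia subgroup and across the $k$ factors is what should yield $Q>d^{3/2}$.

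The main obstacle, I expect, is controlling the number $k$ of simple factors together with the ramification. Naive estimates of the form $[G:T]\le|\Out(S)|^k\,k!$ and $\psi(1)^2\le[T:N]\,\theta(1)^2$ lose a factorial (or a factor $k^k$), which already swamps the exponential gap $(b(S)^2/|S|)^k$ for small groups such as $S=A_5$ and moderate $k$. To beat this I would abandon the crude orbit count and use the product structure directly: tracking the support of $\theta$ (its set of nontrivial components $\theta_i$), invoking sharper degree bounds for simple groups in the spirit of \cite{Larsen-Malle-Tiep}, and exploiting that a highly ramified or long-orbit $\theta$ forces correspondingly many competing characters and hence a large $Q$. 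Balancing these three quantities—orbit length, ramification, and component degrees—against $|S|^k$ is the technical heart of the argument, with the finitely many small configurations checked directly.
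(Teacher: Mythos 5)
Your reductions are all correct and match what the paper does implicitly: $|G|<e^4-e^3$ is equivalent to $d<e^2-e$ and to $Q^2>d|G|$ with $Q=|G|-d^2$, the reduction to $d=b(G)$ is valid, the case $N\le\Ker\chi$ is easy, and the Clifford-theoretic mass identity $\sum_{\psi'\in\Irr(G\mid\theta)}\psi'(1)^2=[G:T]^2[T:N]\theta(1)^2$ is true. The genuine gap is in the main case, and it is fatal to the approach as outlined: you run Clifford theory on the character $\theta$ lying \emph{under} the largest-degree character $\chi$, and both of your sources of mass for $Q$ can dry up simultaneously. Writing $\chi=\mu^G$ with $\mu\in\Irr(T\mid\theta)$ and $\mu|_N=e_\mu\theta$, one has $d^2=[G:T]^2e_\mu^2\theta(1)^2$, while $\sum_{\mu'\in\Irr(T\mid\theta)}e_{\mu'}^2=[T:N]$; nothing in your argument prevents $e_\mu^2=[T:N]$ (full ramification) or $e_\mu^2=[T:N]-1$, in which case the fiber over the orbit of $\theta$ contributes nothing (or only $[G:T]^2\theta(1)^2$, negligible next to $d^2$) beyond $\chi$ itself. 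Your proposed repair --- that a highly ramified $\theta$ ``forces correspondingly many competing characters'' --- is false exactly in this extreme case: full ramification means $\Irr(G\mid\theta)=\{\chi\}$, i.e.\ no competitors at all. What then remains is $Q\ge|G/N|$, which is hopeless when $|N|$ is large relative to $|G/N|$; already for $G=N$ your two sources give essentially nothing, and covering that case requires the hard simple-group bound $|S|>2b(S)^2$ (Theorem~\ref{theorem-nonabelian-simple-groups}, the subject of the paper's Section~3 via Lusztig theory --- the off-the-shelf Larsen--Malle--Tiep constant $1+2/(120\,000!)$ is far too weak for your exponential-versus-factorial bookkeeping).

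It is worth seeing how the paper sidesteps precisely this concentration problem: rather than analyzing the character under $\chi$, it \emph{constructs} a character $\theta\in\Irr(S)$ that is extendible to $\Aut(S)$ and of very large degree $\theta(1)>|S|^{3/8}$ (Theorem~\ref{theorem3/8}, valid for $S\ncong\PSL_2(q)$). Extendibility kills all ramification and orbit issues: $\psi=\theta\times\cdots\times\theta$ extends to some $\chi_0\in\Irr(G)$ (Lemma~\ref{lemma-extension}), Gallagher's theorem identifies the fiber $\Irr(G\mid\psi)=\{\beta\chi_0:\beta\in\Irr(G/N)\}$, and its total mass exceeds $|N|^{3/4}|G/N|=|G|/|N|^{1/4}$. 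Then a clean dichotomy finishes: either $b(G)$ is attained in this fiber, whence $b(G)\le b(N)b(G/N)$ and Lemma~\ref{lemma-N-G/N} applies, or the whole fiber counts toward $Q$, giving $Q>|G|^{3/4}+|G|^{1/2}\ge b(G)^{3/2}+b(G)$ and hence $e>\sqrt{b(G)}+1$. The groups $S\cong\PSL_2(q)$, which admit no extendible character of degree $>|S|^{3/8}$, then require the separate analysis of Sections~\ref{section:S=PSL even} and~\ref{section:S=PSLodd} (Steinberg plus a degree $q\pm1$ character for $q$ even; inertia-group and induction arguments for $q$ odd) --- a difficulty your outline never confronts. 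To salvage your plan you would need a structural result ruling out (near-)full ramification of $\theta$ over a non-abelian minimal normal subgroup, or a guaranteed supply of additional $\Irr(N)$-orbits with controlled inertia and large degree; the latter is, in effect, what Theorem~\ref{theorem3/8} provides.
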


Theorem~\ref{theorem-main} convinces us that those groups with
$|G|=e^4-e^3$ are necessarily solvable. It would be interesting to
confirm this, or to even classify them completely, a task that seems
nontrivial to us. In Section~\ref{section: final}, we show that they
must be the so-called Gagola groups of specific type and present
some of their examples.

Let $\bF(G)$ and $b(G)$ respectively denote the Fitting subgroup and
the largest degree of an irreducible character of $G$. An old (and
still open) conjecture of Gluck \cite{Gluck} asserts that
$|G:\bF(G)|\leq b(G)^2$ whenever $G$ is solvable. In a recent
extension of Gluck's conjecture to arbitrary finite groups
\cite{Cossey-Halasi-Maroti-Nguyen}, it has been predicted that
$|G:\bF(G)|\leq b(G)^3$. This means that, when $G$ has a trivial
solvable radical, it is expected that $|G|\leq b(G)^3$. In the
course of proving Theorem~\ref{theorem-main}, we in fact prove that
$e>\sqrt{b(G)}+1$, and this, on the other end, provides a
\emph{lower bound} for $|G|$ in terms of $b(G)$ in those groups.

\begin{theorem}\label{theorem-main-3}
Let $G$ be a finite group with a non-abelian minimal normal
subgroup. Then
\[|G| >b(G)(b(G)+\sqrt{b(G)}+1).\]
\end{theorem}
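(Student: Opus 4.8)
The plan is to read Theorem~\ref{theorem-main-3} as the arithmetic reformulation of the inequality $e>\sqrt{b(G)}+1$ advertised just above its statement. Concretely, I would apply the relation $|G|=d(d+e)$ to the irreducible character of largest degree, so that $d=b(G)$ and $e=|G|/b(G)-b(G)$; this choice is legitimate since $b(G)$ divides $|G|$ and $b(G)^2\le|G|$, so $e$ is a nonnegative integer. Writing $b:=b(G)$, the target bound $|G|>b(b+\sqrt{b}+1)$ is equivalent, after dividing by $b$ and rearranging, to $|G|/b-b>\sqrt{b}+1$, that is, to $e>\sqrt{b}+1$. Thus the whole content of the theorem is this single inequality, which is exactly what the proof of Theorem~\ref{theorem-main} is arranged to furnish for groups with a non-abelian minimal normal subgroup.

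To establish $e>\sqrt{b}+1$, equivalently $|G|>b^2+b^{3/2}+b$, I would expand $|G|=\sum_{\chi\in\Irr(G)}\chi(1)^2$ and show that the characters other than a fixed $\psi$ of degree $b$ already contribute more than $b^{3/2}+b$. The engine is Clifford theory relative to the non-abelian minimal normal subgroup $N$, a direct product $S^k$ of copies of a non-abelian simple group $S$. If $\psi$ lies over $\theta\in\Irr(N)$ with inertia group $I=I_G(\theta)$, then the Clifford correspondence (together with the fact that the ramification indices are the degrees of the irreducible projective representations of $I/N$, whose squares sum to $|I/N|$) shows that the family $\Irr(G\mid\theta)$ contributes $|G:I|^2\,\theta(1)^2\,|I/N|$ to $\sum\chi(1)^2$, whereas $\psi(1)^2=|G:I|^2\,\theta(1)^2 f^2$ with $f$ the ramification index of $\psi$ satisfying $f\le\sqrt{|I/N|}$. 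The excess $|G:I|^2\theta(1)^2\bigl(|I/N|-f^2\bigr)$ is strictly positive unless $I/N$ has an essentially unique projective character, and weighing this excess against $b=|G:I|\,\theta(1)\,f$ is what produces the $\sqrt{b}$ scale. The opposite case, in which $\psi$ is trivial on $N$, is comparatively easy: the $N$-faithful characters alone carry a constant fraction of $|G|$ since $|N|\ge 60$, which dwarfs $b^{3/2}+b$; so the delicate regime is precisely when $\psi$ lies over a nonprincipal $\theta$.

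The main obstacle, as is typical in this circle of problems, is sharpening the lower-order term enough to reach the exact threshold $\sqrt{b}+1$ rather than merely $O(\sqrt{b})$, and doing so uniformly across all finite simple groups $S$ — including the bounded families of small Lie rank and the sporadic groups, where the generic estimates degrade and explicit character-degree data are needed. I expect this to depend on the classification and on degree bounds for simple groups of the kind exploited in \cite{Larsen-Malle-Tiep}. A secondary technical point is bookkeeping the contributions of the several $G$-orbits on $\Irr(N)$ and of $\Irr(G/N)$ simultaneously, so that in every configuration enough degree-squared mass beyond $b^2$ is accounted for to clear $b^{3/2}+b$.
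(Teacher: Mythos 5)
Your reduction of the theorem to the inequality $e>\sqrt{b(G)}+1$ (with $d=b(G)$) is correct and is exactly how the paper frames it, and your Clifford-theoretic identity $\sum_{\chi\in\Irr(G\mid\theta)}\chi(1)^2=|G:I|^2\theta(1)^2|I/N|$ is also correct. But the engine you build on it has a genuine gap: the ``excess'' $|G:I|^2\theta(1)^2\bigl(|I/N|-f^2\bigr)$ attached to the orbit of the character $\theta\in\Irr(N)$ lying under the top-degree character $\psi$ can simply be zero, and not only in exotic fully ramified situations. The most basic instance is $I_G(\theta)=N$: then $\psi=\theta^G$ is the \emph{unique} character over $\theta$, $f=1$, $|I/N|=1$, and your family contributes nothing beyond $b^2$. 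Even when the excess is positive, you have no lower bound for it: if $|I/N|-f^2\geq 1$ the excess is only guaranteed to be at least $|G:I|^2\theta(1)^2$, which need not exceed $b^{3/2}=\bigl(|G:I|\theta(1)f\bigr)^{3/2}$ once $f$ is large compared with $|G:I|\theta(1)$. So in ``every configuration'' you are thrown back on the unspecified bookkeeping over other $G$-orbits of $\Irr(N)$ and over $\Irr(G/N)$, which is precisely where the proof has to happen; your final paragraph essentially concedes this.

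The paper closes exactly this gap by refusing to work with the unknown $\theta$ under $\psi$ and instead \emph{choosing} the character of $N$: it proves (Theorem~\ref{theorem3/8}) that every simple $S\ncong\PSL_2(q)$ has an irreducible character of degree greater than $|S|^{3/8}$ that extends to $\Aut(S)$, so that $\theta\times\cdots\times\theta$ extends to all of $G$ (Lemma~\ref{lemma-extension}); then $f=1$, $I=G$, and Gallagher's theorem gives a family of total degree-squared mass $\chi(1)^2|G/N|>|G|/|N|^{1/4}$, which is quantitatively large enough to clear $b(G)^{3/2}+b(G)$. The residual case in which $b(G)$ is attained inside this family is handled by a separate multiplicative argument (Lemma~\ref{lemma-N-G/N}), giving the even stronger bound $e>2\sqrt{b(G)}$. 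Finally, $\PSL_2(q)$ genuinely lacks such a large extendible character, and the paper spends two further sections (Theorems~\ref{theorem-S-congPSL(2,q)q-even}, \ref{theorem-S-congPSL(2,q)1}, and~\ref{theorem-S-congPSL(2,q)2}) combining the Steinberg character with a second family (degree $q\pm1$ characters, or characters induced from stabilizers such as $\PGL_2(q)\wr\Sy_k$ intersected with $G$) to accumulate the needed mass. None of these ingredients --- the $|S|^{3/8}$ extendibility theorem, the fallback lemma, or the $\PSL_2$ analysis --- appears in your proposal, and without them the threshold $\sqrt{b(G)}+1$ is out of reach.
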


Theorem~\ref{theorem-main-3} is not true for non-solvable groups in
general, as shown by non-solvable $2$-transitive Frobenius groups.
We should also mention that we know of no finite groups $G$ with a
non-abelian minimal normal subgroup such that $|G|\leq 2b(G)^2$. In
fact, we are able to prove the following, which solves a weak form
of a prediction of Isaacs raised in \cite{Isaacs2}, see
Section~\ref{section:simple groups} for a detailed discussion.

\begin{theorem}\label{theorem-nonabelian-simple-groups}
Let $S$ be a finite non-abelian simple group. Then $|S|>2b(S)^2$.
Consequently, if $|S| = d(d+e)$ where $d$ is the degree of some
irreducible character of $S$ then $|S| < 2e^2$.
\end{theorem}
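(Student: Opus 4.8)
The ``consequently'' part is immediate once the main inequality is in hand. Since $d$ is a degree of an irreducible character of $S$, we have $d\le b(S)$, so $2d^2\le 2b(S)^2<|S|=d(d+e)=d^2+de$; hence $d^2<de$, which forces $d<e$. Then $|S|=d^2+de<de+de=2de<2e^2$, as claimed. Thus everything reduces to proving $|S|>2b(S)^2$, equivalently $b(S)<\sqrt{|S|/2}$, for every non-abelian simple group $S$. The identity $\sum_{\chi\in\Irr(S)}\chi(1)^2=|S|$ gives for free only the weak bound $b(S)<\sqrt{|S|}$, so the whole point is to gain the extra factor $\sqrt2$. I would do this through the classification of finite simple groups, handling the alternating, sporadic, and Lie-type families in turn.

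For $S=\Al_n$ with $n\ge5$, I would bound $b(\Al_n)$ by $b(\Sy_n)$: an irreducible character of $\Sy_n$ either restricts irreducibly to $\Al_n$ or splits into two constituents of half its degree, so in all cases $b(\Al_n)\le b(\Sy_n)=\max_{\lambda\vdash n}f^\lambda$. Known explicit estimates for the largest degree $f^\lambda$ of $\Sy_n$ (of Vershik--Kerov type, or via hook-length bounds) give $b(\Sy_n)^2<n!/4=|\Al_n|/2$ for all sufficiently large $n$, whence $|\Al_n|>2b(\Al_n)^2$; the finitely many small $n$ (where the top $\Sy_n$-degree may be self-conjugate and $b(\Al_n)$ is genuinely smaller than $b(\Sy_n)$) are checked directly from the character tables. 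The sporadic groups together with the Tits group form a finite list, for which $|S|>2b(S)^2$ is verified directly from the known character degrees.

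The main work, and the principal obstacle, is the groups of Lie type. Here I would use that for $S$ defined over $\mathbb{F}_q$ in defining characteristic $p$ one has $|S|=q^N|S|_{p'}$, where $N$ is the number of positive roots, while by Lusztig's Jordan decomposition every irreducible degree has the shape $\chi(1)=|S:C(s)|_{p'}\,\psi(1)$ with $s$ semisimple and $\psi$ unipotent; this allows one to bound $b(S)$ above by a fixed multiple of $q^N$ and compare with the lower bound $|S|_{p'}=\prod_i(q^{d_i}\mp 1)$, which exceeds $q^N$ by a factor of order $q^{\mathrm{rank}}$. Consequently $|S|/b(S)^2$ is of order $q^{\mathrm{rank}}$ and surpasses $2$ as soon as the rank or $q$ is not too small. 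The delicate point is that the Steinberg character is not always the one of largest degree—already for $\PSL_2(q)$ the degree $q+1$ beats $q$—so the upper bound on $b(S)$ must be made uniform across all the (possibly twisted) types; and the low-rank, small-$q$ groups, such as $\PSL_2(q)$, $\PSL_3(q)$, $\PSU_3(q)$, $\Sp_4(q)$, and the Suzuki and Ree groups, lie outside the generic estimate and must be settled individually from their explicit degree data. I expect this uniform Lie-type bound, together with the bookkeeping for the exceptional small cases, to be where essentially all of the difficulty resides.
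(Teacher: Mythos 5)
Your reduction of the ``consequently'' statement to $|S|>2b(S)^2$ is correct, and your treatment of the alternating groups (via $b(\Al_n)\le b(\Sy_n)$ and effective Vershik--Kerov-type bounds) and of the sporadic groups is sound in outline. The genuine gap is in the Lie-type case, and it sits exactly where you place your confidence rather than your caution: the claim that Lusztig's parametrization ``allows one to bound $b(S)$ above by a fixed multiple of $q^N$'' is false for classical groups of unbounded rank over a fixed small field. Take $S=\SL_n(2)$ and a regular semisimple element $s$ whose elementary divisors are all the distinct irreducible polynomials over $\FF_2$ of degree at most $D$, each with multiplicity one; then $\bC_S(s)$ is a maximal torus of order $\prod_f\bigl(2^{\deg f}-1\bigr)$, and the unique character in the Lusztig series of $s$ has degree
\[
|S:\bC_S(s)|_{2'}\;=\;2^{N-1}\,\frac{\prod_{i=2}^n\bigl(1-2^{-i}\bigr)}{\prod_{d\le D}\bigl(1-2^{-d}\bigr)^{N_d}},
\]
where $N_d\approx 2^d/d$ is the number of irreducible polynomials of degree $d$ and $n=\sum_{d\le D}dN_d\approx 2^{D+1}$. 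Since $\sum_{d\le D}N_d2^{-d}\approx\log D$, this degree is of order $2^N\log_2 n$: the ratio $b(S)/q^N$ tends to infinity with the rank. Consequently your ``generic estimate'' fails not only for the low-rank groups you list ($\PSL_2(q)$, $\PSL_3(q)$, $\PSU_3(q)$, $\Sp_4(q)$, Suzuki and Ree groups), but for the \emph{infinite} families $\SL_n(2)$, $\Sp_{2n}(2)$, $\Omega^\pm_{2n}(2)$ (and groups of moderate rank over $\FF_3$, $\FF_5$, $\FF_7$) with $n$ arbitrarily large --- and an infinite family cannot be ``settled individually from explicit degree data.''

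This is precisely where the paper does its real work. The conclusion $|S|>2b(S)^2$ does still hold in these families --- the excess of $b(S)$ over $q^N$ is only logarithmic in the rank, while $|S|_{p'}/q^N$ grows like $q^{\mathrm{rank}}$ --- but proving it requires controlling how large $b(S)/q^N$ can be, and that forces an analysis of the centralizer structure $\bC_S(s)\cong K\times\prod_i\GL_{k_i}^{\epsilon_i}(2^{d_i})$ combined with counting irreducible and self-reciprocal polynomials over $\FF_2$ (the paper's lemma based on Meyn--G\"otz). The paper's argument shows that either the largest degree is attained by at least two irreducible characters (whence $|S|>2b(S)^2$ trivially), or else one can manufacture many other semisimple classes $t$ whose associated characters $\psi$ satisfy $\psi(1)/\chi(1)\ge 81/320$, forcing Isaacs's invariant $\varepsilon(S)$ to exceed $1$. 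Note also the structural difference: the paper does not reprove the bound for all simple groups from scratch, but leverages the inequality $\varepsilon(S)>1$ already established in the literature (Larsen--Malle--Tiep for exceptional, sporadic, and most classical groups; Halasi--Hannusch--Nguyen for alternating groups), so that only the classical groups excluded there need the delicate treatment above. Your proposal would have to be repaired along the same lines to close the gap.
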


Our proof of Theorem~\ref{theorem-main} is fundamentally different
from those in \cite{Isaacs2,Durfee-Jensen, Lewis} and, as expected,
replies on the classification of finite simple groups. Let $N$ be a
non-abelian minimal normal subgroup of $G$ and suppose that $S$ is a
simple direct factor of $N$. The proof is divided in two main cases,
according to whether or not $S$ is isomorphic to $\PSL_2(q)$.

The key to the proof in the case $S\ncong \PSL_2(q)$ is to show that
$S$ possesses an irreducible character $\theta$ extendible to
$\Aut(S)$ of `very large' degree, namely $\theta(1) > |S|^{3/8}$,
see Theorem~\ref{theorem3/8}. This result, which might have other
applications, together with
Theorem~\ref{theorem-nonabelian-simple-groups} and recent results
\cite{Larsen-Malle-Tiep,Halasi-Hannusch-Nguyen} on bounding the
largest character degree in terms of smaller degrees in finite
simple groups allow us to obtain the desired bound, see
Section~\ref{section:simple groups} and
Theorem~\ref{theorem-S-ncongPSL(2,q)}. The case $S \cong \PSL_2(q)$
turns out to be surprisingly complicated and requires delicate
treatment, and is done in Sections~\ref{section:S=PSL even} and
\ref{section:S=PSLodd}.


\section{Extendible characters of simple groups}
\label{section:extendible characters}

In this section we will show that a non-abelian simple group
$S\ncong \PSL_2(q)$ has an irreducible character extendible to
$\Aut(S)$ of very large degree. The following theorem is a key tool
toward the proof of Theorem~\ref{theorem-main} in the case
$S\ncong\PSL_2(q)$.

\begin{theorem}\label{theorem3/8}
Let $S$ be a non-abelian simple group
not isomorphic to $\PSL_2(q)$ where $q$ is a prime power. Then $S$
has an irreducible character $\theta$ extendible to $\Aut(S)$ such
that $\theta(1)>|S|^{3/8}$.
\end{theorem}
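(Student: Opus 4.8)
The plan is to go through the classification of finite simple groups and, for each family, exhibit an explicit irreducible character that both extends to the full automorphism group and has degree exceeding $|S|^{3/8}$. The exponent $3/8$ is a concrete threshold, so for each family I would compute an order-of-magnitude estimate for $|S|$ as a function of its defining parameters (the rank and field size for Lie type, $n$ for alternating groups) and compare it against the degree of a natural candidate character. Since we are allowed to exclude $\PSL_2(q)$, the remaining families all have enough ``room'' that a single well-chosen character should work, with only finitely many small exceptions to be checked by hand (for instance via the ATLAS).

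\emph{Alternating and sporadic groups.} For $S = \Al_n$ with $n \geq 5$ (excluding the coincidences with $\PSL_2(q)$), I would use a character coming from a self-conjugate or otherwise $\Sy_n$-invariant partition. The standard character of degree $n-1$ extends to $\Sy_n = \Aut(\Al_n)$ for most $n$, but $n-1$ is far too small to beat $|\Al_n|^{3/8} \approx (n!/2)^{3/8}$. Instead I would take a character whose degree grows super-polynomially, such as one indexed by a partition of balanced shape; the hook-length formula gives a degree that is roughly $e^{cn\log n}$ against $|S|^{3/8} \approx e^{(3/8)n\log n}$, so for $n$ large any partition bounded away from the extreme hooks suffices, and invariance under $\Sy_n$ (equivalently, the partition not being transposed to a distinct one, or the two halves of a split character summing appropriately) guarantees extendibility. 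The sporadic groups, the Tits group, and the small exceptional cases are a finite list handled directly from known character tables.

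\emph{Groups of Lie type.} This is where the main work lies, and I would split it according to whether $S$ is of classical or exceptional type. For the exceptional groups the rank is bounded, so $|S|$ is a fixed polynomial in $q$ and it suffices to exhibit one unipotent character (for example the Steinberg character, of degree $|S|_p$ the $p$-part of $|S|$, which is $q^{N}$ for $N$ the number of positive roots) and check that its degree exceeds $|S|^{3/8}$; the Steinberg character is famously fixed by all automorphisms, so extendibility is automatic. For the classical groups I would again lean on the Steinberg character or on a low-rank unipotent character, verifying the degree inequality uniformly in the rank and field size. The Steinberg degree $q^N$ compared with $|S|^{3/8} \approx q^{(3/8)\dim S}$ reduces to checking $N > (3/8)\dim S$, i.e.\ that the number of positive roots exceeds $3/8$ of the dimension; since $\dim S = 2N + \ell$ for rank $\ell$, this is $N > (3/4)N + (3/8)\ell$, i.e.\ $N > (3/2)\ell$, which holds for all irreducible root systems except those of very small rank.

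The hard part will be twofold. First, the small-rank classical and low-parameter exceptional cases where $N$ is not comfortably larger than $(3/2)\ell$ (and where $\PSL_2(q)$-type coincidences cluster) must be examined individually, possibly replacing Steinberg by a different unipotent or semisimple character to clear the $3/8$ bar. Second, and more delicately, extendibility to $\Aut(S)$ must be established for whatever character is chosen outside the Steinberg case: $\Aut(S)$ is generated by inner, diagonal, field, and graph automorphisms, and one must check that the chosen $\theta$ is invariant under the graph and field automorphisms and then invoke the fact that invariant characters of simple groups of Lie type extend (using that the relevant Schur multiplier and outer automorphism group are well understood, so the obstruction in $H^2$ vanishes). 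I would rely on Lusztig's classification of unipotent characters together with the known action of automorphisms on them to pin down a uniformly extendible choice, so that the degree estimate and the extendibility can be verified in the same breath across each family.
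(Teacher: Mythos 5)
Your treatment of the Lie-type and sporadic cases is essentially the paper's own: the Steinberg character of degree $|S|_p=q^N$ extends to $\Aut(S)$ (this is a theorem of Feit, not merely a consequence of invariance -- invariance alone does not give extensions when $\Out(S)$ is non-cyclic, so you should cite that result rather than call it ``automatic''), and the inequality $q^N>|S|^{3/8}$ reduces, via $|S|<q^{2N+\ell}$, to $N\geq \tfrac{3}{2}\ell$, which in fact holds for every irreducible root system except $A_1$; no replacement character is ever needed in small rank, and the sporadic groups are an Atlas check. So that half of your proposal is correct and matches the paper.

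The genuine gap is in the alternating case, and it is not a technicality: your extendibility criterion is inverted. For $n\neq 6$ one has $\Aut(\Al_n)=\Sy_n$, and an irreducible character of $\Al_n$ extends to $\Sy_n$ if and only if it is the restriction of some $\chi_\lambda$ with $\lambda$ \emph{not} self-conjugate; when $\lambda=\overline{\lambda}$, the restriction $\chi_\lambda\downarrow_{\Al_n}$ splits into two irreducible constituents of degree $\chi_\lambda(1)/2$ which are interchanged (not fixed) by any odd permutation, hence are neither $\Sy_n$-invariant nor extendible. So a ``self-conjugate or otherwise $\Sy_n$-invariant partition'' of balanced shape is precisely the object you must avoid, not the one you should use. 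This is exactly the crux of the theorem: the paper's proof of the $\Al_n$ case is an induction on $n$ using the branching rule and the bounds $|A(\lambda)|<\sqrt{2n}+1$, $|R(\lambda)|<\sqrt{2n}$, and its entire case analysis exists to control what happens when a symmetric Young diagram appears among the diagrams obtained by adding or removing a node. Separately, even after correcting the criterion, your degree bound is only asserted: ``any partition bounded away from the extreme hooks'' having degree $e^{cn\log n}$ with $c>3/8$ is not a known black box, so you would need to fix an explicit non-self-conjugate family (say, near-rectangular shapes for arbitrary $n$) and actually run the hook-length estimate to beat $(n!/2)^{3/8}$. That route can likely be made to work and would give a more explicit character than the paper's inductive argument produces, but as written your sketch both selects the wrong partitions and omits the computation that constitutes the real content of this case.
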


\begin{remark}
The exclusion of $\PSL_2(q)$ in the theorem is necessary since
$|\PSL_2(q)|=q(q^2-1)/(2,q-1)$ and $b(\PSL_2(q))=q$ or $q+1$ for
$q\geq 5$.
\end{remark}

In the study of Gluck's conjecture
\cite{Cossey-Halasi-Maroti-Nguyen} concerning the largest character
degree and the index of the Fitting subgroup in a finite group, the
first author along with J.\,P.~Cossey, Z.~Halasi, and A.~Mar\'{o}ti
have proved that every non-abelian simple group $S$ possesses an
irreducible character extendible to $\Aut(S)$ with degree at least
$|S|^{1/3}$. Unfortunately this bound is not enough for our current
purpose. However, the ideas in the proof of
\cite[Theorem~12]{Cossey-Halasi-Maroti-Nguyen} can be further
developed to prove Theorem~\ref{theorem3/8}.

For the reader's convenience and to prove Theorem~\ref{theorem3/8}
for the alternating groups, we recall some combinatorics concerning
partitions, Young diagrams, and representation theory of the
alternating and symmetric groups.

Let $n$ be a positive integer. A finite sequence
$(\lambda_1,\lambda_2, \ldots ,\lambda_k)$ for some $k$ such that
$\lambda_1\geq\lambda_2\geq\cdots \geq\lambda_k$ and
$\lambda_1+\lambda_2+\cdots+\lambda_k=n$ is said to be a partition
of $n$. The Young diagram associated to $\lambda$, denoted by
$Y_\lambda$, is defined to be the finite subset of $\NN\times\NN$
such that $(i,j)\in Y_\lambda \text{ if and only if } i\leq
\lambda_j.$

When two Young diagrams that can be transformed into each other when
reflected about the line $y=x$, we say that the associated
partitions are conjugate. The partition conjugate to $\lambda$ is
denoted by $\overline{\lambda}$. If $\lambda=\overline{\lambda}$
then $Y_\lambda$ is symmetric and we say that $\lambda$ is
self-conjugate. For each node $(i,j)\in Y_\lambda$, we define the
so-called \emph{hook length} $h(i,j)$ to be the number of nodes that
are directly above it, directly to the right of it, or equal to it.
That is,
\[h(i,j):=1+\lambda_j+\overline{\lambda}_i-i-j.\]
It is well-known that there are bijective correspondences between
the partitions of $n$, the Young diagrams of cardinality $n$, and
the irreducible complex characters of $\Sy_n$. Denote by
$\chi_\lambda$ or $\chi_{Y_\lambda}$ the irreducible character of
$\Sy_n$ corresponding to $\lambda$ and $Y_\lambda$. The degree of
$\chi_\lambda$ is given by the \emph{hook-length formula} of
J.\,S.~Frame, G.\,B.~Robinson, and
R.\,M.~Thrall~\cite{Frame-Robinson-Thrall}:
\[\chi_{\lambda}(1)=\chi_{Y_\lambda}(1)=
\frac{n!}{\prod _{(i,j)\in Y_\lambda}h(i,j)}.\]

The irreducible characters of $\Al_n$ can be obtained by restricting
those of $\Sy_n$ to $\Al_n$. More explicitly,
$\chi_{\lambda}\downarrow_{\Al_n}=\chi_{\overline{\lambda}}\downarrow_{\Al_n}$
is irreducible of degree $\chi_{\lambda_1}(1)$ if $\lambda$ is not
self-conjugate. Otherwise, $\chi_{\lambda}\downarrow_{\Al_n}$ splits
into two different irreducible characters of the same degree
$\chi_{\lambda_1}(1)/2$.

Define $A(\lambda)$  to be the set of nodes that can be added to
$Y_\lambda$ to obtain another Young diagram of size $n+1$. It is
known (see~\cite[\S2]{Larsen-Malle-Tiep} for instance) that
\[|A(\lambda)|<\sqrt{2n}+1.\]
Similarly, define $R(\lambda)$ to be the set of nodes that can be
removed from $Y_\lambda$ to obtain another Young diagram of size
$n-1$. We have \[|R(\lambda)|<\sqrt{2n}.\]

The branching rule~\cite[\S9.2]{James} asserts that the restriction
$\chi_\lambda\downarrow_{\Sy_{n-1}}$ of $\chi_\lambda$ to
$\Sy_{n-1}$ is a sum of irreducible characters $\chi_{Y_\lambda
\backslash \{ (i,j) \}}$ as $(i,j)$ runs over all nodes in
$R(\lambda)$; and the induction $\chi_\lambda^{\Sy_{n+1}}$ of
$\chi_\lambda$ to $\Sy_{n+1}$ is a sum of irreducible characters
$\chi_{Y_\lambda \cup \{(i,j)\}}$ as $(i,j)$ runs over all nodes in
$A(\lambda)$.

\begin{proof}[Proof of Theorem~\ref{theorem3/8}]
If $S$ is a simple sporadic group or the Tits group, the proof is a
routine check from the Atlas~\cite{Atl1}. If $S$ is a simple group
of Lie type in characteristic $p$ and $S\ncong \PSL_2(q)$ where $q$
is a prime power, we then realize that $S$ has the so-called
\emph{Steinberg character} $\St_S$ of degree $\St_S(1)=|S|_p$, the
$p$-part of the order of $S$. Furthermore, $\St_S$ is extendible to
$\Aut(S)$ (see~\cite{Feit} for instance). Now we can check the
inequality $|S|_p>|S|^{3/8}$ easily by consulting the list of simple
groups and their orders, see~\cite[p. xvi]{Atl1} for instance.

So for the rest of this proof we assume that $S=\Al_n$ is an
alternating group of degree $n\geq7$. Note that $\Al_5\cong
\PSL_2(5)$ and $\Al_6\cong \PSL_2(9)$ are not in our consideration.
Let $\rho(\Al_n)$ be the largest degree of an irreducible character
of $\Al_n$ that can be extended to $\Sy_n$. We aim to show that
$\rho(\Al_n)>(n!/2)^{3/8}$ when $n\geq 7$.

Since the lemma can be checked directly by computer for small $n$,
we assume that $n\geq 75$. In fact, we will prove by induction on
$n\geq 75$ that $\rho(\Al_{n+1})\geq (n+1)^{3/8}\rho(\Al_n)$ and
this implies that $\rho(\Al_n)>(n!/2)^{3/8}$ immediately.

Let $\psi$ be an irreducible character of $\Al_n$ with $n\geq 75$
such that $\psi$ is extendible to $\Sy_n$ and $\psi(1)=\rho(\Al_n)$.
Let $\chi$ be an extension of $\psi$ to $\Sy_n$ and let $\lambda$
and $Y$ be respectively the partition and the Young diagram
associated to $\chi$. By the branching rule, we have
\[
  \chi^{\Sy_{n+1}}=\sum_{(i,j)\in A(\lambda)} \chi_{Y
  \cup\{(i,j)\}}.
\]

Assume that all the Young diagrams in $\{Y\cup \{(i,j)\}\mid
(i,j)\in A(\lambda)\}$ are non-symmetric. Then all the irreducible
characters $\chi_{Y \cup \{(i,j)\}}$ where $(i,j)\in A(\lambda)$
restrict irreducibly to $\Al_{n+1}$, and thus \[\chi_{Y \cup
\{(i,j)\}}(1)\leq \rho(\Al_{n+1}).\] We therefore deduce that
\[\chi^{\Sy_{n+1}}(1)\leq |A(\lambda)|\rho(\Al_{n+1}).\]
Since $|A(\lambda)|< \sqrt{2n}+1$ and
$\chi^{\Sy_{n+1}}(1)=(n+1)\rho(\Al_n)$, it follows that
$(n+1)\rho(\Al_n)< (\sqrt{2n}+1)\rho(\Al_{n+1})$, and hence
\[\rho(\Al_{n+1})> \frac{n+1}{ \sqrt{2n}+1}\rho(\Al_n).\]
When $n\geq 75$, we can check that
$(n+1)/(\sqrt{2n}+1)>(n+1)^{3/8}$. Therefore we conclude that
$\rho(\Al_{n+1})>(n+1)^{3/8}\rho(\Al_n)$, as desired.

It remains to assume that there is a symmetric Young diagram of the
form $Y\cup \{(i,j)$ with $(i,j)\in A(\lambda)$. Then there is
exactly one such diagram and at most $\sqrt{2n}$ non-symmetric
diagrams in $\{Y\cup \{(i,j)\}\mid (i,j)\in A(\lambda)\}$. Let $Y'$
be that symmetric Young diagram and $\mu$ be the corresponding
partition. By the branching rule, we have
\[{\chi_{Y'}}\downarrow_{\Sy_n}=\sum_{(i,j)\in R(\mu)} \chi_{Y'\backslash \{ (i,j) \}}.\]
We distinguish two cases:

\medskip

(1) All the Young diagrams of the form $Y'\backslash \{ (i,j) \}$
where $(i,j)\in R(\mu)$ are non-symmetric. Then the characters
associated to these diagrams restrict irreducibly to $\Al_n$ and
thus their degrees are at most $\rho(\Al_n)$. As $|R(\mu)|
<\sqrt{2n+2}$, we deduce that
\[\chi_{Y'}(1)= \sum_{(i,j)\in R(\mu)} \chi_{Y'\backslash \{ (i,j)
  \}}(1)< \sqrt{2n+2} \rho(\Al_n).\]
We then have
\begin{align*} (n+1)\rho(\Al_n)&= \sum_{(i,j)\in A(\lambda)} \chi_{Y \cup
\{(i,j)\}}(1)\\
&=\chi_{Y'}(1)+ \sum_{(i,j)\in A(\lambda), Y \cup \{(i,j)\}\neq Y' }
\chi_{Y \cup \{(i,j)\}}(1)\\
&< \sqrt{2n+2} \rho(\Al_n)+\sum_{(i,j)\in A(\lambda),~Y \cup
\{(i,j)\}\neq Y' } \chi_{Y \cup \{(i,j)\}}(1).
\end{align*}
Since $\chi_{Y \cup \{(i,j)\}}(1)\leq \rho(\Al_{n+1})$ whenever $Y
\cup \{(i,j)\}\neq Y'$, it follows that
\begin{align*}
(n+1)\rho(\Al_n)&<\sqrt{2n+2}
\rho(\Al_n)+(|\Al(\lambda)|-1)\rho(\Al_{n+1})\\&< \sqrt{2n+2}
\rho(\Al_n)+\sqrt{2n} \rho(\Al_{n+1}).
\end{align*}
Thus
\[\rho(\Al_{n+1})> \frac{ n+1- \sqrt{2n+2}}{
  \sqrt{2n}} \rho(\Al_n).\] Again, as $n\geq 75$ we now can easily
  deduce that $\rho(\Al_{n+1})>(n+1)^{3/8}\rho(\Al_n)$.

\medskip

(2) There is a symmetric Young diagram of the form $Y'\backslash
\{(i,j)\}$ where $(i,j)\in R(\mu)$. Let $Y''$ be this symmetric
Young diagram and $\nu$ be the associated partition. Then $Y''$ is
the only one symmetric diagram and there are at most $\sqrt{2n+2}-1$
non-symmetric diagrams in $\{Y'\backslash \{(i,j)\}\mid (i,j)\in
R(\mu)\}$. So we have two symmetric Young diagrams $Y'$ and $Y''$
and $Y''$ is obtained from $Y'$ by removing a node. Therefore, if
another node is removed from $Y''$ to get a Young diagram (of size
$n-1$), the resulting diagram cannot be symmetric. Therefore, by the
branching rule,
\[\chi_{Y''}(1)< \sqrt{2n} \rho(\Al_{n-1}).\]
It follows that
\[\chi_{Y'}(1)< \sqrt{2n} \rho(\Al_{n-1})+ (\sqrt{2n+2}-1) \rho(\Al_n).\]
Therefore,
\begin{align*} (n+1)\rho(\Al_n)
&=\chi_{Y'}(1)+ \sum_{(i,j)\in A(\lambda), Y \cup \{(i,j)\}\neq Y' }
\chi_{Y \cup \{(i,j)\}}(1)\\
&<  \sqrt{2n} \rho(\Al_{n-1})+ (\sqrt{2n+2}-1) \rho(\Al_n)+
\sqrt{2n} \rho(\Al_{n+1}).
\end{align*}
Using the induction hypothesis that $\rho(\Al_{n-1})\leq
n^{-3/8}\rho(\Al_n)$, we then have
\[\rho(\Al_{n+1})> \frac{ n+2- \sqrt{2n+2}- \sqrt{2n} n^{-3/8}}{\sqrt{2n}} \rho(\Al_n).\] Now with $n\geq 75$ we can check that
\[\frac{ n+2- \sqrt{2n+2}- \sqrt{2n} n^{-3/8}}{\sqrt{2n}}>(n+1)^{3/8},\] and the proof is complete.
\end{proof}


\section{Simple groups}\label{section:simple groups}

In this section we prove
Theorem~\ref{theorem-nonabelian-simple-groups}, and then deduce
Theorems~\ref{theorem-main} and \ref{theorem-main-3} for
characteristically simple groups. This will be used in the proof for
arbitrary groups. We restate
Theorem~\ref{theorem-nonabelian-simple-groups} here.

\begin{theorem}\label{theorem-nonabelian-simple-groups-1}
Let $S$ be a non-abelian simple group. Then $|S|>2b(S)^2$.
Consequently, if $|S| = d(d+e)$ where $d$ is the degree of some
irreducible character of $S$ then $|S| < 2e^2$.
\end{theorem}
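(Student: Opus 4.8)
The plan is to derive the stated consequence quickly from the inequality $|S|>2b(S)^2$ and then to establish that inequality by running through the classification of finite simple groups. For the consequence, write $|S|=d(d+e)=d^2+de$ with $d\le b(S)$. From $|S|>2b(S)^2\ge 2d^2$ we get $d^2+de>2d^2$, hence $e>d$; therefore $d+e<2e$ and $|S|=d(d+e)<e(d+e)<2e^2$. So the entire content is the bound $|S|>2b(S)^2$, equivalently $b(S)<\sqrt{|S|/2}$. Since $\sum_{\chi\in\Irr(S)}\chi(1)^2=|S|$ and the trivial character already contributes $1$, the trivial estimate is $b(S)^2<|S|$; the whole point is to gain the extra factor of $2$, and this must be done group by group.

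First I would dispose of the sporadic groups and the Tits group by direct inspection of the Atlas~\cite{Atl1}, a finite check. For the alternating groups $S=\Al_n$ with $n\ge 5$ I would use that every irreducible character of $\Al_n$ is a constituent of the restriction of some $\chi_\lambda\in\Irr(\Sy_n)$, so that $b(\Al_n)\le b(\Sy_n)$, and reduce the goal $b(\Al_n)^2<n!/4$ to an upper estimate for $b(\Sy_n)$. The key point is that the mass $\sum_\lambda\chi_\lambda(1)^2=n!$ is distributed over all $\lambda\vdash n$, and by the branching rule a character of near-maximal degree has many neighbours of comparable degree obtained by adding or removing a node; consequently $b(\Sy_n)^2$ is a vanishing fraction of $n!$ as $n\to\infty$. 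For $n$ beyond an explicit bound this yields $b(\Sy_n)^2<n!/4$ outright, while the finitely many small cases are checked from known degree tables. The tightest cases are $n=5,6,7,8$ and must be verified directly.

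The main work, and the main obstacle, is the groups of Lie type. Here I would combine the order formula, giving $|S|$ of size roughly $q^{\dim}$, with upper bounds on $b(S)$ coming from the generic (unipotent and semisimple) degree polynomials. For each fixed type the ratio $b(S)^2/|S|$ tends to $0$ as $q\to\infty$, so for $q$ larger than an explicit bound the inequality is automatic; the remaining finitely many pairs (type,$q$) are settled from explicit degree data. The delicate families are those of small rank, where $b(S)$ lies closest to $\sqrt{|S|}$. For example, by the Remark above $b(\PSL_2(q))\le q+1$ for $q\ge 5$, so $|S|/b(S)^2\ge q(q-1)/\big(\gcd(2,q-1)(q+1)\big)$, which exceeds $2$ precisely once $q$ is large enough (from $q\ge 7$ in the odd case); the exceptional small group $\PSL_2(4)\cong\PSL_2(5)\cong\Al_5$ must be inspected by hand, where the genuine value $b=5$ (rather than the generic $q+1$) gives $2\cdot5^2=50<60$.

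The hard part is organizing the Lie type case uniformly: one needs, for each classical and exceptional family, an upper bound on $b(S)$ sharp enough to beat $\sqrt{|S|/2}$, together with a clean list of the low-rank, small-field exceptions to be treated individually. Across all families the small simple groups are where $|S|>2b(S)^2$ is tightest, and these finitely many groups carry the real content of the verification.
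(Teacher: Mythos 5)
Your reduction of the consequence to the bound $|S|>2b(S)^2$ is correct, and your plans for the sporadic and the alternating groups are workable in principle (the branching-rule argument can be made explicit, along the lines of Section~2 of the paper; the paper itself instead quotes the result $\varepsilon(\Al_n)>1$ of Halasi--Hannusch--Nguyen). The genuine gap is in your treatment of the groups of Lie type. You organize that case as: for each fixed type, $b(S)^2/|S|\to 0$ as $q\to\infty$, leaving ``finitely many pairs (type,$q$)'' plus ``low-rank, small-field exceptions'' to be checked against explicit degree data. But the classical groups form infinitely many types (one for each rank), so fixing a small field leaves \emph{infinitely} many groups unaccounted for, and these --- $\SL_n(2)$, $\Sp_{2n}(2)$, $\Omega^{\pm}_{2n}(2)$ for all $n$, together with bounded-rank lists over $\FF_3$, $\FF_5$, $\FF_7$ --- are exactly the hard cases: they are neither finitely many nor low-rank, and they cannot be settled from tables.

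Moreover, the generic upper bounds on $b(S)$ that you propose to use collapse for these groups. For instance, Seitz's bound $b(S)\le |G:T|_{q'}$ (with $T$ a maximal torus of minimal order), which the paper does use for its bounded-rank small-field list, degenerates when $q=2$: the split maximal torus of $\Sp_{2n}(2)$ has order $(q-1)^n=1$, so the bound becomes $|G|_{2'}$, which is far larger than $\sqrt{|S|/2}$. This is precisely why the paper, after invoking Larsen--Malle--Tiep (whose Theorem~4.7 excludes these same families) and Lübeck's data for rank at most $8$, spends essentially its entire proof on the $q=2$ families of unbounded rank: it uses Lusztig's parametrization of $\Irr(S)$ by pairs $((s),\theta)$, the structure of centralizers $\bC_{S}(s)\cong K\times\prod_i \GL_{k_i}^{\epsilon_i}(2^{d_i})$, the fact that a character of maximal degree must involve the Steinberg character of the centralizer and (if unique) forces all elementary divisors of a given type to occur with equal multiplicity, and counts of self-reciprocal irreducible polynomials over $\FF_2$ (Meyn--G\"otz) to produce either the direct bound $|S|>2b(S)^2$ or sufficiently many characters $\psi$ with $\psi(1)/b(S)\ge 81/320$, forcing $\varepsilon(S)>1$. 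Without an argument of this kind --- some mechanism that handles unbounded rank over a fixed tiny field --- your plan leaves infinitely many of the groups in the statement unproved.
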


Let $\Irr(G)$ denote the set of irreducible character of $G$.
Motivated by the problem of improving Snyder's bound,
Isaacs~\cite{Isaacs2} introduced and studied the invariant
\[\varepsilon(S) := \frac{ \sum_{ \chi \in \Irr (S),\, \chi(1) < b(S)} \chi(1)^2}{b(S)^2} \] for non-abelian simple groups $S$. He raised the question whether the largest
character degree of $S$ can be bounded in terms of smaller degrees
in the sense that $\varepsilon (S) \geq \varepsilon$ for some
universal constant $\varepsilon > 0$ and for all non-abelian simple
groups $S$. This was answered in the affirmative
in~\cite{Larsen-Malle-Tiep} with the bounding constant $\varepsilon$
taken to be $2/(120\,000!)$. We note that this rather small bound
comes from the alternating groups, see~\cite[Theorem~2.1 and
Corollary~2.2]{Larsen-Malle-Tiep} for more details.

To further improve the bound from $Be^6$ to $e^6 + e^4$, Isaacs even
predicted that $\varepsilon(S) > 1$ for every non-abelian simple
group $S$. This was in fact confirmed in~\cite{Larsen-Malle-Tiep}
for the majority of simple classical groups, and for all simple
exceptional groups of Lie type as well as sporadic simple groups.
Recently, Z.~Halasi, C.~Hannusch, and the first author have
confirmed that indeed $\varepsilon (\Al_n)> 1$ for every $n \geq 5$,
see \cite{Halasi-Hannusch-Nguyen}.

One easily sees that if $\varepsilon(S)> 1$ then $e>b(S)\geq d$ so
that $2b(S)^2<|S| < 2e^2$, and
Theorem~\ref{theorem-nonabelian-simple-groups-1} is proved for the
simple group $S$. Furthermore, when $S$ has a unique irreducible
character of the largest degree $b(S)$, $|S|>2b(S)^2$ is equivalent
to $\varepsilon(S) > 1$. Therefore
Theorem~\ref{theorem-nonabelian-simple-groups-1} can be viewed as a
weak form of Isaacs's prediction.

To prove Theorem~\ref{theorem-nonabelian-simple-groups-1}, we will
use Lusztig's classification of complex irreducible characters of
finite groups of Lie type (see \cite[Chapter~13]{Digne-Michel} and
\cite[\S13.8]{Carter}) and detailed structure of the centralizers of
semisimple elements in finite classical groups (see for instance
\cite[Section~3]{Tiep-Zalesskii}, \cite[Section~2]{Ng}, and
\cite[Section~2]{Burkett-Nguyen}). We first record two observations.

\begin{lemma}\label{lem:productbd}
Let $q\geq 2$.  Then $\displaystyle{\prod_{i=2}^\infty (1 - 1/q^i) >
9/16}$.
\end{lemma}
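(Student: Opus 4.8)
The plan is to reduce to the single worst case $q=2$ by a monotonicity observation, and then to estimate the resulting numerical product by separating a few explicit factors from an easily-bounded tail.

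First I would note that for each fixed $i\geq 2$ the factor $1-q^{-i}$ is a strictly increasing function of $q$ on $[2,\infty)$, and every factor lies in $(0,1)$. Hence for integers $q'\geq q\geq 2$ the product $\prod_{i=2}^\infty(1-q'^{-i})$ dominates $\prod_{i=2}^\infty(1-q^{-i})$ term by term, so the whole product is nondecreasing in $q$. (Convergence is clear since $\sum_i q^{-i}<\infty$.) It therefore suffices to prove the inequality at $q=2$, that is, to show $\prod_{i=2}^\infty(1-2^{-i})>9/16$.

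For $q=2$ I would split the product as $\prod_{i=2}^\infty(1-2^{-i})=\bigl(\prod_{i=2}^{4}(1-2^{-i})\bigr)\cdot\prod_{i=5}^\infty(1-2^{-i})$. The first factor is computed exactly: $\prod_{i=2}^{4}(1-2^{-i})=\tfrac34\cdot\tfrac78\cdot\tfrac{15}{16}=\tfrac{315}{512}$. For the tail I would invoke the elementary Weierstrass inequality $\prod_i(1-a_i)\geq 1-\sum_i a_i$, valid for $a_i\in[0,1]$ (proved by a one-line induction and passage to the limit). This gives $\prod_{i=5}^\infty(1-2^{-i})\geq 1-\sum_{i=5}^\infty 2^{-i}=1-2^{-4}=\tfrac{15}{16}$. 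Multiplying, $\prod_{i=2}^\infty(1-2^{-i})\geq \tfrac{315}{512}\cdot\tfrac{15}{16}=\tfrac{4725}{8192}$, and since $\tfrac{9}{16}=\tfrac{4608}{8192}$ the strict inequality $\tfrac{4725}{8192}>\tfrac{9}{16}$ follows at once. Combined with the monotonicity step this proves the lemma for all $q\geq 2$.

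The only delicate point is that the margin at $q=2$ is genuinely small: the true value of $\prod_{i=2}^\infty(1-2^{-i})$ is about $0.5776$, only barely above $9/16=0.5625$. A naive global bound such as $\log(1-x)\geq -x/(1-x)$ applied to every factor yields roughly $e^{-2/3}\approx 0.513<9/16$ and is \emph{too weak}. Thus the essential idea is to isolate enough of the largest (smallest-index) factors exactly before bounding the remainder, and one must check that three explicit factors suffice (two already give $\tfrac{147}{256}>\tfrac{9}{16}$, so there is a little slack, but not much). No step beyond these exact rational computations and the Weierstrass estimate is required.
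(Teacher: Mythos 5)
Your proof is correct, and it necessarily differs from the paper's, because the paper offers no argument at all for this lemma: it simply cites Lemma~4.1(ii) of the Larsen--Malle--Tiep paper \cite{Larsen-Malle-Tiep}. Your route is a self-contained elementary verification: the reduction to $q=2$ by term-by-term monotonicity of factors in $(0,1)$ is sound; the exact computation $\prod_{i=2}^{4}(1-2^{-i})=\tfrac{3}{4}\cdot\tfrac{7}{8}\cdot\tfrac{15}{16}=\tfrac{315}{512}$ is right; the Weierstrass inequality $\prod_i(1-a_i)\geq 1-\sum_i a_i$ applied to the tail gives $\prod_{i\geq 5}(1-2^{-i})\geq \tfrac{15}{16}$; and $\tfrac{315}{512}\cdot\tfrac{15}{16}=\tfrac{4725}{8192}>\tfrac{4608}{8192}=\tfrac{9}{16}$ is a valid strict comparison. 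Your closing diagnosis is also apt: the constant $9/16$ is tight enough (the true value at $q=2$ is about $0.5776$) that one must peel off the first few factors exactly before estimating the remainder, and your observation that two exact factors already yield $\tfrac{21}{32}\cdot\tfrac{7}{8}=\tfrac{147}{256}>\tfrac{144}{256}$ is correct. The only blemish is the side remark quantifying the ``naive'' bound: the estimate $\sum_{i\geq2}2^{-i}/(1-2^{-i})\approx 0.605$ gives roughly $e^{-0.605}\approx 0.546$, not $e^{-2/3}\approx 0.513$; but since $0.546<9/16$, the qualitative point (that the uniform logarithmic bound is too weak) still stands, and this remark is not load-bearing. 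In terms of trade-offs: the paper's citation keeps its Section~3 short and defers the numerics to a reference, while your argument makes the lemma verifiable on the spot with nothing beyond rational arithmetic, which is arguably preferable for so small a fact.
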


\begin{proof}
This is \cite[Lemma 4.1(ii)]{Larsen-Malle-Tiep}.
\end{proof}

Let $f\in \FF_q[t]$ be an irreducible monic polynomial.  In what
follows, we will write $\widetilde{f}$ for the polynomial over
$\FF_q[t]$ whose roots are $\{\alpha^{-1}|\alpha \hbox{ is a root of
$f$}\}$. Note that if $f=\widetilde{f}$, then the $\deg(f)$ is
necessarily even.  Moreover, \cite[Theorem~3]{Meyn-Gotz} gives a
formula for the number of $f$ satisfying $f=\widetilde{f}$, which
yields the following lemma.

\begin{lemma}\label{lem:srim}
Let $S_2(d)$ be the number of irreducible monic polynomial over
$\FF_2$ of degree $2d$ satisfying $f=\widetilde{f}$.  Then
$S_2(1)=S_2(2)=S_2(3)=1$; $S_2(4)=2$; $S_2(5)=3$;  $S_2(6)=5$;
$S_2(7)=9$; and $S_2(d)\geq 16$ for $d\geq 8$.
\end{lemma}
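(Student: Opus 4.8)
The plan is to convert the Meyn--G\"otz count into an explicit closed form for $S_2(d)$ and then estimate it. Specialized to $\FF_2$, \cite[Theorem~3]{Meyn-Gotz} yields
\[
  S_2(d)=\frac{1}{2d}\sum_{\substack{e\mid d\\ e\ \mathrm{odd}}}\mu(e)\,2^{d/e},
\]
where $\mu$ is the M\"obius function. I would first record this identity and confirm the exact values for $1\le d\le 8$ by direct substitution: when $d\in\{1,2,4,8\}$ the only odd divisor is $e=1$, so $S_2(d)=2^{d}/(2d)$, giving $1,1,2,16$; and the remaining cases $d\in\{3,5,6,7\}$ carry one or two correction terms and produce $1,3,5,9$, matching the asserted table. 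In particular $S_2(8)=2^{8}/16=16$ is attained exactly, with no error term to control.

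For $d\ge 9$ the strategy is to split off the dominant term $e=1$ and bound the remainder. Writing
\[
  S_2(d)=\frac{1}{2d}\Bigl(2^{d}+\!\!\sum_{\substack{e\mid d,\ e>1\\ e\ \mathrm{odd}}}\!\!\mu(e)\,2^{d/e}\Bigr),
\]
every odd divisor $e>1$ satisfies $e\ge 3$, so the surviving exponents $d/e$ are distinct non-negative integers not exceeding $\lfloor d/3\rfloor$; summing the associated powers of $2$ as a geometric series gives
\[
  \Bigl|\sum_{\substack{e\mid d,\ e>1\\ e\ \mathrm{odd}}}\mu(e)\,2^{d/e}\Bigr|<2^{\lfloor d/3\rfloor+1}\le 2^{d/3+1}.
\]
Hence $S_2(d)>(2^{d}-2^{d/3+1})/(2d)$, and it remains to verify the elementary inequality $2^{d}-2^{d/3+1}\ge 32\,d$ for all $d\ge 9$. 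Since the left-hand side grows exponentially while the right-hand side grows only linearly, this reduces to the single check $d=9$ (where $512-16=496\ge 288$) together with a routine monotonicity observation, delivering $S_2(d)\ge 16$.

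The one point needing care, rather than any real difficulty, is the transition at $d=8$: the geometric bound above yields only $(2^{8}-2^{3})/16=15.5$, just short of $16$. This is precisely why I handle $d=8$ through the exact formula—where no correction term occurs and the value is exactly $16$—and invoke the estimate only from $d=9$ onward, where it is valid. Thus the main obstacle is bookkeeping: pinning down the precise shape of the Meyn--G\"otz formula over $\FF_2$, in particular that the M\"obius sum ranges over the \emph{odd} divisors of $d$ with leading factor $1/(2d)$, and cleanly separating the small cases so that the crude geometric estimate is applied only where it holds.
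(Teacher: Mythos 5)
Your proposal is correct and follows the same route as the paper: the paper's proof is simply the one-line remark that the lemma is ``straightforward from \cite[Theorem~3]{Meyn-Gotz}'', and your argument is exactly that straightforward verification carried out in full, with the specialization of the Meyn--G\"otz formula to $\FF_2$, the small-case substitutions, and the geometric-series estimate for $d\geq 9$ all checking out (including the correct observation that $d=8$ must be handled by the exact value, since the crude bound only gives $15.5$).
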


\begin{proof}
This is straightforward from \cite[Theorem~3]{Meyn-Gotz}.
\end{proof}

\begin{proof}[Proof of Theorem~\ref{theorem-nonabelian-simple-groups-1}]
Since the inequality $\varepsilon(S)> 1$ has been established for
all the simple exceptional groups of Lie type, the sporadic simple
groups, and the alternating groups, it remains to prove the theorem
for the simple classical groups.

Further, we note that we only need to consider those classical
groups of Lie type excluded from
\cite[Theorem~4.7]{Larsen-Malle-Tiep}.   That is, we must consider
the simple groups found in the following list:
\[\SL_n(2), \Sp_{2n}(2), \Omega_{2n}^\pm(2),\]
\[\PSL_n(3) \hbox{ with $5\leq n\leq 14$}, \PSU_n(2) \hbox{ with $7\leq
n\leq 14$}, \]\[\PSp_{2n}(3)\hbox{ or } \Omega_{2n+1}(3) \hbox{with
$4\leq n\leq 17$}, \PO^{\pm}_{2n}(3) \hbox{ with $4\leq n\leq30$},
\]\[\PO^\pm_8(7), \hbox{ and } \PO_{2n}^\pm(5)\hbox{with
$4\leq n\leq 6$}.\] We will make use of some of the ideas used in
\cite{Larsen-Malle-Tiep}, as well as the list of character degrees
of small rank groups of Lie type available on F.~L\"{u}beck's
website \cite{Lubeck}.

When the rank is at most $8$ all the character degrees of the simply connected group $G$ of the same
type as $S$ in
this list can be found from \cite{Lubeck}, and one can use this to check that in fact
$|S| >2b(G)^2\geq 2b(S)^2$, which implies that $e> b(S)$ and hence $|S|< 2e^2$.
So we assume that $S$ is one of the groups listed above with $n\geq
9$ (and $n\geq 10$ for type $A$).

\medskip

(1)  First, let $S$ be $\PSL_n(3)$, $\PSU_n(2)$, $\PSp_{2n}(3)$,
$\Omega_{2n+1}(3)$, $\PO^{\pm}_{2n}(3)$, $\PO^\pm_8(7)$, or
$\PO_{2n}^\pm(5)$, with $n$ as above, but larger than $8$. Note that
by \cite[Theorem~2.1]{Seitz}, \[b(S)\leq b(G)\leq |G:T|_{q'},\]
where $q$ is the size of the underlying field for $S$, $G$ is the
group of fixed points for the simple simply connected algebraic
group corresponding to $S$, and $T$ is a maximal torus of $G$ of
minimal order.  The size of $T$ is $(q-1)^n$ (or $(q-1)^{n-1}$ for
$\PSL_n(q)$) if $S$ is of untwisted type, and  can be found, for
example, in \cite[Table~1]{Larsen-Malle-Tiep} if $S$ is of twisted
type.  We may check directly using this bound for $b(S)$ that in
fact, $|S|>2b(S)^2$ for each group in this finite list.  This shows
that if $S$ is one of \[ \PSL_n(3) \hbox{ with $5\leq n\leq 14$},
\PSU_n(2) \hbox{ with $7\leq n\leq 14$},
\]\[\PSp_{2n}(3)\hbox{ or } \Omega_{2n+1}(3) \hbox{with $4\leq n\leq
17$}, \PO^{\pm}_{2n}(3) \hbox{ with $4\leq n\leq30$},
\]\[\PO^\pm_8(7), \hbox{ and } \PO_{2n}^\pm(5)\hbox{with
$4\leq n\leq 6$},\] then $2b(S)^2<|S|<2e^2$.

\medskip

(2) Now let $S$ be one of the groups $\SL_n(2)$, $\Sp_{2n}(2),$ or
$\Omega_{2n}^\pm(2)$, and assume $n\geq 10$ in the first case and
$n\geq 9$ in the latter two cases.  Then $S^\ast\cong S$ is
self-dual and the center of the corresponding algebraic group is
trivial.  We make the identification $S^\ast\cong S$, and hence by Lusztig's classification of complex irreducible
characters of finite groups of Lie type, $\mathrm{Irr}(S)$ is
parametrized by pairs $((s), \theta)$, where $(s)$ is a semisimple
conjugacy class in $S$ and
$\theta\in\mathrm{Irr}(\bC_{S}(s))$ is a unipotent character.
Further, the character parametrized by $((s), \theta)$ has degree
\[[S: \bC_{S}(s)]_{2'}\theta(1).\]

Notice that if there are at least two $\chi\in\mathrm{Irr}(S)$
satisfying $\chi(1)=b(S)$, then certainly $|S|>2b(S)^2$, and hence
$|S|<2e^2$.  Therefore, we may further assume that there is a unique
such $\chi$.

Notice that the centralizer of a semisimple element $s$ of $S$
is of the form \[\bC_{S}(s)\cong K\times H_1\times....\times
H_r,\] where each $H_i$ is of the form
$\GL_{k_i}^{\epsilon_i}(2^{d_i})$, $\epsilon_i$ is $+$ in the linear
case and $\pm$ for the symplectic and orthogonal cases, $K$ is
trivial in the linear case, $\Sp_{2m}(2)$ in the symplectic case,
and in the orthogonal case, we may assume by the argument toward the
beginning of \cite[Part (3) of Proof of Theorem
4.8]{Larsen-Malle-Tiep} that $K$ is $\Omega_{2m}^\pm(2)$.  (Indeed, by
\cite[Theorem 3.7]{Tiep-Zalesskii}, $\bC_{S}(s)\cong K_1\times H_2\times...\times H_r$
where each $H_i$ is as described above, and $K_1$ has a normal subgroup isomorphic to $\Omega_{2m}^\pm(2)$
with either trivial quotient or quotient isomorphic to $\GU_2(2)$.  Since $\mathrm{St}_{K_1}$ in the latter case has degree $2^{m(m-1)+1}$,
there is no loss in assuming $\bC_{S}(s)\cong K\times H_1\times H_2\times...\times H_r$ with $K$ as stated.)
Note
that we use the notation $\GL_k^+(2^d):=\GL_k(2^d)$ and
$\GL_k^-(2^d):=\GU_k(2^d)$. Further, $\sum k_id_i+m=n$, and the $K$
and $H_i$ are determine by the elementary divisors of $s$ acting on
the natural module $\mathbb{F}_2^n$ or $\mathbb{F}_2^{2n}$ for
$S$.  Namely, if $S=\Sp_{2n}(2)$ or $\Omega_{2n}^\pm(2)$, a
factor of $H_i\cong \GL_{k_i}(2^{d_i})$ corresponds to a pair of
monic polynomials $g_i(t)\widetilde{g}_i(t)$ in $\mathbb{F}_2[t]$
with multiplicity $k_i$, where $g_i\neq \widetilde{g_i}$ are
irreducible of degree $d_i$.  Moreover, $H_i\cong
\GU_{k_i}(d^{d_i})$ corresponds to a monic irreducible $f_i(t)\neq
t-1$ with degree $2d_i$ and multiplicity $k_i$, where
$f=\widetilde{f}$.  In these cases, $K$ corresponds to the
elementary divisor $t-1$, with multiplicity $2m$. If $S=\SL_{n}(2)$,
each elementary divisor $f_i(t)$ with degree $d_i$ and multiplicity
$k_i$ yields a factor $H_i\cong \GL_{k_i}(2^{d_i})$.

Let $\chi\in\mathrm{Irr}(S)$ satisfying $\chi(1)=b(S)$ be
parametrized by $((s), \theta)$. Then by
\cite[Theorem~1.2]{Larsen-Malle-Tiep}, $\theta$ must be the
Steinberg character $\mathrm{St}_{\bC_{S}(s)}$ of
$\bC_{S}(s)$. Recall that the Steinberg character of
$\GL_k^\pm(2^d)$ has degree $2^{dn(n-1)/2}$, the Steinberg character
of $\Sp_{2m}(2)$ has degree $2^{m^2}$, and the Steinberg character
of $\Omega^\pm_{2m}(2)$ has degree $2^{m(m-1)}$.

Moreover, by our assumption that $\chi$ is the unique member of
$\mathrm{Irr}(S)$ satisfying $\chi(1)=b(S)$, we see that it must be
the case that every polynomial of a given degree and type as
described above must appear as an elementary divisor of $s$ with the
same multiplicity.  (Indeed, otherwise, we may find another
semisimple element $s'\in S$ not conjugate to $s$ with
$\bC_{S}(s)\cong \bC_{S}(s')$, and hence the character
parametrized by $((s'), \mathrm{St}_{\bC_{S}(s')})$ has degree
$b(S)$ as well.)

We will proceed using some estimates for the number of monic
irreducible polynomials of a given type as above.

Let $S=\Omega_{2n}^\epsilon(2)$.  We present the complete proof in
this case and note that the proof in the other two cases are
similar, though less complicated.

\medskip

(3) First, if no factors of the form $\GL_{k_i}^\pm(2^{d_i})$
appears
in $\bC_{G^\ast}(s)$, then we see that $\chi=\mathrm{St}$ has degree $2^{n(n-1)}$.
Otherwise, write $\bC_{S}(s)\cong \Omega_{2m}^\beta(2)\times \GL_{k_1}^{\epsilon_1}(2^{d_1}) \times \GL_{k_2}^{\epsilon_2}(2^{d_2})
\times\ldots\times \GL_{k_r}^{\epsilon_r}(2^{d_r})$ with $r\geq 1$. In this case, 

\begin{eqnarray*}
\chi(1)&=&2^{m(m-1)+\sum_{\ell=1}^r d_\ell k_\ell(k_\ell-1)/2}\frac{(2^n-\epsilon)\prod_{j=m}^{n-1}(2^{2j}-1)}{(2^m-\beta)\prod_{\ell=1}^r\left(\prod_{i=1}^{k_\ell}(2^{id_\ell}-\epsilon_\ell^{i})\right)}\\
&=& 2^{m(m-1)+\sum_{\ell=1}^r d_\ell k_\ell(k_\ell-1)/2}\frac{(2^m+\beta)\prod_{j=m+1}^{n}(2^{2j}-1)}{(2^n+\epsilon)\prod_{\ell=1}^r\left(\prod_{i=1}^{k_\ell}(2^{id_\ell}-\epsilon_\ell^{i})\right)}\\
&=& \frac{2^{m(m-1)+\sum_{\ell=1}^r d_\ell k_\ell(k_\ell-1)/2}}{2^{\sum_{\ell=1}^r d_\ell k_\ell(k_\ell+1)/2}}\frac{(2^m+\beta)\prod_{j=m+1}^{n}(2^{2j}-1)}{(2^n+\epsilon)\prod_{\ell=1}^r\left(\prod_{i=1}^{k_\ell}(1-(\epsilon_\ell/2^{d_\ell})^i)\right)}\\
&\leq &\left(\frac{16}{9}\right)^r\left(\frac{2^m+1}{2^n-1}\right) \frac{2^{n(n+1)-m(m+1)+m(m-1)+\sum_{\ell=1}^r d_\ell k_\ell(k_\ell-1)/2}}{2^{\sum_{\ell=1}^r d_\ell k_\ell(k_\ell+1)/2}}\\
&=& \left(\frac{16}{9}\right)^r\left(\frac{1+1/2^m}{1-1/2^n}\right) \frac{2^{m+n(n+1)-m(m+1)+m(m-1)+\sum_{\ell=1}^r d_\ell k_\ell(k_\ell-1)/2}}{2^{n+\sum_{\ell=1}^r d_\ell k_\ell(k_\ell+1)/2}}\\
&=&\left(\frac{16}{9}\right)^r\left(\frac{1+1/2^m}{1-1/2^n}\right)2^{n(n-1)}\\
&\leq& \left(\frac{16}{9}\right)^r\left(\frac{3}{2}\right)\left(\frac{512}{511}\right)2^{n(n-1)}.\\
\end{eqnarray*}
Note that the bound remains true if $m=0$, and that we have used
Lemma \ref{lem:productbd} and the fact that $n\geq 9$.  If $0\leq
r\leq 3$, this calculation (together with the first observation for
the case $r=0$) yields that
\[\chi(1)< 9\cdot 2^{n(n-1)},\] so we see
\[\frac{|S|}{\chi(1)^2}> \left(\frac{1}{9}\right)^2\frac{(2^n-1)\prod_{i=1}^{n-1}(2^{2i}-1)}{2^{n(n-1)}}=
\frac{1}{81}\frac{(2^n-1)2^{n(n-1)}\prod_{i=1}^{n-1}(1-1/2^{2i})}{2^{n(n-1)}}\]
and by Lemma~\ref{lem:productbd},
\[\frac{|S|}{\chi(1)^2}>  \left(\frac{1}{81}\right)\left(\frac{9}{16}\right)(2^n-1),\] which is larger than $2$ since $n\geq 9$.
Hence we see that $|S|>2\chi(1)^2=2b(S)^2$ if $r\leq 3$. We may
therefore assume that \[\bC_{S}(s)\cong
\Omega_{2m}^\beta(2)\times \GL_{k_1}^{\epsilon_1}(2^{d_1}) \times
\GL_{k_2}^{\epsilon_2}(2^{d_2}) \times\ldots\times
\GL_{k_r}^{\epsilon_r}(2^{d_r})\] with $r\geq 4$, and assume
$d_1k_1\geq d_2k_2\geq...\geq d_rk_r$.

\medskip

(4) Our strategy for the remainder of the proof is to consider
semisimple elements $t\in S$ and the characters $\psi$
corresponding to $(t, \mathrm{St}_{\bC_{S}(t)})$.  We will show
that there are a sufficient number of such semisimple elements with
$\psi(1)/\chi(1)$ large enough to imply that $\epsilon(S) >1$, and
therefore that $|S|<2e^2$.

Let $\mathfrak{F}$ denote the set of all monic polynomials $f \neq
t-1$ over $\mathbb{F}_2$ which are either irreducible satisfying
$f=\widetilde{f}$ or of the form $f=g\widetilde{g}$ where $g\neq
\widetilde{g}$ are irreducible.  For $f\in \mathfrak{F}$, write
$\epsilon_f=-1$ if $f$ is irreducible and $\epsilon_f=1$ if
$f=g\widetilde{g}$, and write $d_f$ for the degree of $f$.  Then
given that $\kappa\colon \mathfrak{F}\rightarrow \mathbb{N}$ is a
function satisfying $n-m=\frac{1}{2}\sum_{f\in\mathfrak{F}}
d_f\kappa(f)$ and $\prod_{f\in\mathfrak{F}}
(\epsilon_f)^{\kappa_f}=\prod_{i=1}^r (\epsilon_i)^k_i $, there
exists a semisimple $t\in S$ with corresponding multiplicities
$\kappa(f)$ for the polynomials $f$ as elementary divisors, and
hence \[\bC_{S}(t)\cong \Omega_{2m}^\beta(2)\times \prod_{f\in
\mathfrak{F}} \GL_{\kappa(f)}^{\epsilon_f}(2^{d_f/2}).\]

Now, notice that there are at least two pairs $(i,j)$ with $4\geq i>
j\geq 1$ such that $d_ik_i+d_jk_j$ is even.  Moreover, these pairs
satisfy $d_ik_i+d_jk_j\geq 4$ since the combination $(d_\ell,
k_\ell)=(1,1)$ can occur at most once.  Also note that if a factor
of the form $\GL_k^\pm(2^{k_id_i+k_jd_j})$ appears in
$\bC_{S}(s)$, it must be that $(k, k_id_i+k_jd_j)=(k_1, d_1)$
or $(k_2, d_2)$ and if $\GU_k(2^{(k_id_i+k_jd_j)/2})$ appears, then
$(k_id_i+k_jd_j)/2\in\{d_1,..,d_4\}$ (and correspondingly
$k\in\{k_1,...,k_4\}$).

We consider four situations:

\smallskip

(i)  $\GL^\epsilon_k(2^{k_id_i+k_jd_j})$ is not a factor of
$\bC_{S}(s)$, in which case we will consider a semisimple
element $t\in S$ with
\[\bC_{S}(t)\cong \Omega_{2m}^\beta(2)\times \GL^\epsilon_{1}(2^{k_id_i+k_jd_j}) \times \prod_{\ell\in\{1,...,r\}\setminus\{i,j\}} \GL_{k_\ell}^{\epsilon_\ell}(2^{d_\ell}).\]

\smallskip

(ii) $\GL^\epsilon_k(2^{k_id_i+k_jd_j})$ is a factor of
$\bC_{S}(s)$, in which case we will consider a semisimple
element $t\in S$ with
\[\bC_{S}(t)\cong \Omega_{2m}^\beta(2)\times \GL^\epsilon_{k+1}(2^{k_id_i+k_jd_j}) \times \GL_{k_\ell}^{\epsilon_\ell}(2^{d_\ell})
\times \GL_{k_5}^{\epsilon_5}(2^{d_5})\ldots\times
\GL_{k_r}^{\epsilon_r}(2^{d_r})\] where we write $\ell\in\{1,..,4\}$
so that $\ell\neq i, j$, or the index corresponding to $(k,
k_id_i+k_jd_j)$.

\smallskip

(iii) $\GU_k(2^{(k_id_i+k_jd_j)/2})$ is not a factor of
$\bC_{S}(s)$, in which case we consider a semisimple element
$t\in S$ with
\[\bC_{S}(t)\cong \Omega_{2m}^\beta(2)\times \GU_{2}(2^{(k_id_i+k_jd_j)/2}) \times \prod_{\ell\in\{1,...,r\}\setminus\{i,j\}} \GL_{k_\ell}^{\epsilon_\ell}(2^{d_\ell}).\]

\smallskip

(iv) $\GU_k(2^{(k_id_i+k_jd_j)/2})$ is a factor of
$\bC_{S}(s)$, in which case we consider a semisimple element
$t\in S$ with
\[\bC_{S}(t)\cong \Omega_{2m}^\beta(2)\times \GU_{k+2}(2^{(k_id_i+k_jd_j)/2}) \times \GL_{k_\ell}^{\epsilon_\ell}(2^{d_\ell}) \times \GL_{k_5}^{\epsilon_5}(2^{d_5})\ldots\times \GL_{k_r}^{\epsilon_r}(2^{d_r})\]
where we write $\ell\in\{1,..,4\}$ so that $\ell\neq i, j$, or the
index corresponding to $(k, (k_id_i+k_jd_j)/2)$.

Note that for situations (iii) and (iv), it must be that
$\epsilon_i^{k_i}\epsilon_j^{k_j}=1$.  In each situation, we will
let $\psi\in\mathrm{Irr}(S)$ correspond to $(t,
\mathrm{St}_{\bC_{S}(t)})$, and arrive at lower bounds for
$\frac{\psi(1)}{\chi(1)}$.  Note that from the last paragraph of
part (3) of the proof of \cite[Theorem~4.8]{Larsen-Malle-Tiep}, we
have that in situation (i), $\psi(1)/\chi(1)>\frac{81}{320}$.  We
use similar arguments in the remaining situations.

Consider situation (ii).  For simplicity in the calculation, rewrite
$(i,j)$ as $(1,2)$ and write $d_0:=d_1k_1+d_2k_2$.  Then

\begin{eqnarray*}
\frac{\psi(1)}{\chi(1)}&=&
\frac{2^{d_0k(k+1)/2}\prod_{\nu=1}^{k_1}(2^{\nu
d_1}-(\epsilon_1)^{\nu})
\prod_{\nu=1}^{k_2}(2^{\nu d_2}-(\epsilon_2)^{\nu})\prod_{\nu=1}^k(2^{\nu d_0}-\epsilon^\nu)}{2^{d_0k(k-1)/2+d_1k_1(k_1-1)/2+d_2k_2(k_2-1)/2} \prod_{\nu=1}^{k+1}(2^{\nu d_0}- \epsilon^\nu)}  \\
&=&  \frac{2^{d_0k}\prod_{\nu=1}^{k_1}(2^{\nu d_1}-(\epsilon_1)^{\nu})\prod_{\nu=1}^{k_2}(2^{\nu d_2}-(\epsilon_2)^{\nu}) }{2^{d_1k_1(k_1-1)/2+d_2k_2(k_2-1)/2}(2^{k d_0+d_0}-\epsilon^{k+1})}  \\
&> & \frac{4}{5}\cdot \frac{\prod_{\nu=1}^{k_1}(2^{\nu d_1}-(\epsilon_1)^{\nu})\prod_{\nu=1}^{k_2}(2^{\nu d_2}-(\epsilon_2)^{\nu}) }{2^{d_1k_1(k_1-1)/2+d_2k_2(k_2-1)/2+d_0}}  \\
&=&\frac{4}{5}\cdot \frac{2^{d_1k_1(k_1+1)/2+d_2k_2(k_2+1)/2}\prod_{\nu=1}^{k_1}(1-(\epsilon_1/2^{ d_1})^\nu)\prod_{\nu=1}^{k_2}(1-(\epsilon_2/2^{d_2})^\nu) }{2^{d_1k_1(k_1-1)/2+d_2k_2(k_2-1)/2+d_0}}  \\
&=& \frac{4}{5}\cdot2^{d_1k_1+d_2k_2-d} \cdot \prod_{\nu=1}^{k_1}(1-(\epsilon_1/2^{ d_1})^\nu)\prod_{\nu=1}^{k_2}(1-(\epsilon_2/2^{d_2})^\nu)\\
&> & \frac{4}{5}\cdot (9/16)^2=\frac{81}{320}\\
\end{eqnarray*}
by Lemma~\ref{lem:productbd}, since $(d_j, \epsilon_j)\neq (1,1)$
for any $j$. In the third line, we have also used the fact that
$2^{kd+d}+1\leq \frac{5}{4} 2^{kd_0+d_0}$ since certainly
$kd+d\geq2$.

Now, consider situation (iii), and again for simplicity rewrite
$(i,j)$ as $(1,2)$ and write $d_0:=d_1k_1+d_2k_2$.  Then

\begin{eqnarray*}
\frac{\psi(1)}{\chi(1)} &=&\frac{2^{d_0/2}\cdot \prod_{\nu=1}^{k_1}(2^{\nu d_1}-(\epsilon_1)^{\nu})\prod_{\nu=1}^{k_2}(2^{\nu d_2}-(\epsilon_2)^{\nu})}{2^{d_1k_1(k_1-1)/2+d_2k_2(k_2-1)/2}\cdot (2^{d_0/2}+1)(2^{d_0}-1)}  \\
&>& \frac{2^{d_0/2}}{2^{d_0/2}+1}\cdot \frac{\prod_{\nu=1}^{k_1}(2^{\nu d_1}-(\epsilon_1)^{\nu})\prod_{\nu=1}^{k_2}(2^{\nu d_2}-(\epsilon_2)^{\nu}) }{2^{d_1k_1(k_1-1)/2+d_2k_2(k_2-1)/2+d_0}}  \\
&>& \frac{81}{256}\cdot \frac{2^{d_0/2}}{2^{d_0/2}+1}\\
&\geq & \frac{81}{256} \cdot \frac{4}{5} = \frac{81}{320}\\
\end{eqnarray*}
where the last inequality is since $d_0\geq 4$, and the
second-to-last is by the same argument as situation (ii).

Finally, consider situation (iv).  As before, write $(i,j)$ as
$(1,2)$, and $d_0:=d_1k_1+d_2k_2$.  We have

\begin{eqnarray*}
\frac{\psi(1)}{\chi(1)} &=&\frac{2^{d_0(k+2)(k+1)/4}\cdot
\prod_{\nu=1}^{k_1}(2^{\nu
d_1}-(\epsilon_1)^{\nu})\prod_{\nu=1}^{k_2}(2^{\nu
d_2}-(\epsilon_2)^{\nu})
\prod_{\nu=1}^k(2^{\nu d/2}-(-1)^\nu)}{2^{dk(k-1)/4+d_1k_1(k_1-1)/2+d_2k_2(k_2-1)/2}\cdot \prod_{\nu=1}^{k+2}(2^{\nu d_0/2}-(-1)^\nu)}  \\
&=&\frac{2^{d_0(k+2)(k+1)/4}\cdot \prod_{\nu=1}^{k_1}(2^{\nu
d_1}-(\epsilon_1)^{\nu})
\prod_{\nu=1}^{k_2}(2^{\nu d_2}-(\epsilon_2)^{\nu})}{2^{d_0k(k-1)/4+d_1k_1(k_1-1)/2+d_2k_2(k_2-1)/2}\cdot (2^{d_0(k+1)/2}-(-1)^{k+1})(2^{d_0(k+2)/2}-(-1)^{k+2})}  \\
\end{eqnarray*}
Now, notice that one of $k+1$ and $k+2$ is even, so that
\[(2^{d_0(k+1)/2}-(-1)^{k+1})(2^{d_0(k+2)/2}-(-1)^{k+2})\leq
\frac{17}{16} 2^{d_0(k+1)/2+d_0(k+2)/2}\] since $d_0(k+1)/2$ and
$d_0(k+2)/2$ are at least $4$.  Hence
\begin{eqnarray*}
\frac{\psi(1)}{\chi(1)}&\geq& \left(\frac{16}{17}\right)
\frac{2^{d_0(k+2)(k+1)/4}\cdot
\prod_{\nu=1}^{k_1}(2^{\nu d_1}-(\epsilon_1)^{\nu})\prod_{\nu=1}^{k_2}(2^{\nu d_2}-(\epsilon_2)^{\nu})}{2^{d_0k(k-1)/4+d_1k_1(k_1-1)/2+d_2k_2(k_2-1)/2}\cdot   2^{d_0(k+1)/2+d_0(k+2)/2}}\\
&=&\left(\frac{16}{17}\right)  \cdot \frac{\prod_{\nu=1}^{k_1}(2^{\nu d_1}-(\epsilon_1)^{\nu})\prod_{\nu=1}^{k_2}(2^{\nu d_2}-(\epsilon_2)^{\nu}) }{2^{d_1k_1(k_1-1)/2+d_2k_2(k_2-1)/2+d_0}}  \\
&>& \left(\frac{16}{17}\right) \left(\frac{81}{256}\right)=
\frac{81}{272}.
\end{eqnarray*}

Hence, in each situation, we see that $\psi(1)/\chi(1)\geq 81/320$.
Now, let $d_0:=d_ik_i+d_jk_j$ as above.  Suppose that
$\epsilon_i^{k_i}\epsilon_j^{k_j}=-1$.    Note that for every $f\in
\mathfrak{F}$ which is irreducible of degree $2d_0$, we can identify
a semisimple element $t$ as in situation (i) or (ii) with
$\epsilon=-1$.   By Lemma \ref{lem:srim}, there are at least 16 such
$f$ as long as $d_0\geq 8$, yielding at least 16 characters $\psi\in
\Irr(S)$ satisfying $\psi(1)/\chi(1)\geq 81/320$ when $d_0\geq 8$.
Hence if $d_0\geq 8$, we see that $\epsilon(S)\geq 16 (81/320)^2>1$,
and $|S|<2e^2$.

Now suppose $\epsilon_i^{k_i}\epsilon_j^{k_j}=-1$.  Define
$\mathfrak{F}_{d_0}\subset \mathfrak{F}$ to be the set of monic
polynomials of the form $g\widetilde{g}$ where  $g\neq
\widetilde{g}$ are irreducible of degree $d_0$ together with the
monic irreducible polynomials $f\neq t-1$ of degree $d_0$ such that
$f=\widetilde{f}$.   Notice that if $n_{d_0}$ is the number of
irreducible monic polynomials over $\FF_2$ of degree $d_0$, then
$|\mathfrak{F}_{d_0}|\geq n_{d_0}/2$.  Moreover, for each choice of
$\mathfrak{f}\in\mathfrak{F}_{d_0}$, we can identify a semisimple
element $t\in S$ as in one of the cases (i)-(iv), with
$\epsilon=1$ in cases (i) and (ii).  This yields at least
$n_{d_0}/2$ characters $\psi$ satisfying $\psi(1)/\chi(1)\geq
81/320$.  Note that by \cite[(5.1)]{Larsen-Malle-Tiep}, if $d_0\geq
3$, then $n_{d_0}\geq \frac{3\cdot2^{d_0}}{4d_0}$.     Then
certainly $|\mathfrak{F}_{d_0}|\geq \frac{n_{d_0}}{2}\geq
\frac{3\cdot2^{d_0}}{8d_0}$ as long as $d_0\geq 3$, which is at
least $12$ if $d_0\geq 8$.

So, if $d_0\geq 8$ for both choices of $(i,j)$ (recall there must be
at least two pairs $(i,j)$ with $d_0=d_ik_i+d_jk_j$ even), then
there are at least $24$ characters $\psi$ satisfying
$\psi(1)/\chi(1)\geq 81/320$, so that $\epsilon(S)\geq 24 \cdot
(81/320)^2>1$, and we see in this case that $|S|<2e^2$.

Finally, considering each possibility for
$\GL_{k_i}^{\epsilon_i}(2^{d_i})\times
\GL_{k_j}^{\epsilon_j}(2^{d_j})$ satisfying $d_ik_i+d_jk_j=4$ or
$6$, we can use similar (but now more explicit) calculations to show
that in each case, $\epsilon(S)>1$, completing the proof for $\Omega_{2n}^\pm(2)$.

We make a final remark about the proofs for $\Sp_{2n}(2)$ and
$\SL_n(2)$. In either case, calculations analogous to those in part
(3) above yield similar results.  The remainder of the proof for
$\Sp_{2n}(2)$ follows directly from the calculations in part (4)
above for $\Omega_{2n}^\pm(2)$, replacing $\Omega_{2m}^\beta(2)$
with $\Sp_{2m}(2)$.  The analogue to part (4) for $\SL_{n}(2)$ is
similar, but requires only considering case (i) above, with
$\epsilon=1$, together with the estimate for $n_d$, since each
elementary divisor of $s$ yields a factor $\GL_{k_i}(2^{d_i})$ in
this case.
\end{proof}

The next observation is useful in the proofs of the main results.

\begin{lemma}\label{lemma-N-G/N}
Let $N$ be a nontrivial proper normal subgroup of $G$. Assume that
$b (G) \leq b (N)b (G/N)$. Then Theorem~\ref{theorem-main} is true
for $G$. Furthermore, if $|G|=b(G)(b(G)+e)$ then $e>2\sqrt{b(G)}$.
\end{lemma}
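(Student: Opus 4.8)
The plan is to establish the ``furthermore'' statement first and then read off Theorem~\ref{theorem-main} from it. The starting observation I would record is that for \emph{any} nontrivial finite group $H$ one has $b(H)<\sqrt{|H|}$ (equality forces $H$ trivial, as noted in the introduction); since $b(H)$ divides $|H|$, this gives $|H|/b(H)\geq b(H)+1$, so in $|H|=b(H)(b(H)+e_H)$ the nonnegative integer $e_H$ is in fact $\geq 1$. As $N$ is a nontrivial proper normal subgroup, both $N$ and $G/N$ are nontrivial, hence $e_N\geq 1$ and $e_{G/N}\geq 1$; these two units will drive everything.

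The key step is a multiplicativity computation. Writing $a:=b(N)$, $c:=b(G/N)$, $b:=b(G)$ and $e:=e_G$, I would expand
\[
|G|=|N|\,|G/N|=a(a+e_N)\,c(c+e_{G/N})=(ac)^2+ac\bigl(a\,e_{G/N}+c\,e_N+e_Ne_{G/N}\bigr).
\]
Comparing with $|G|=b^2+be$ and invoking the hypothesis $b\leq ac$ (so that $(ac)^2-b^2\geq 0$ and $ac/b\geq 1$) yields
\[
e\;\geq\; a\,e_{G/N}+c\,e_N+e_Ne_{G/N}\;\geq\; a+c+1,
\]
the last inequality using $e_N,e_{G/N}\geq 1$. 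Finally AM--GM gives $a+c+1\geq 2\sqrt{ac}+1>2\sqrt{ac}\geq 2\sqrt{b}$ (the middle step using $ac\geq b$), which is exactly $e>2\sqrt{b(G)}$.

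To obtain Theorem~\ref{theorem-main} for $G$, I would feed $e>2\sqrt{b}$ back in: it gives $b<e^2/4$, so $|G|=b(b+e)<e^4/16+e^3/4$, and an elementary check shows $e^4/16+e^3/4<e^4-e^3$ whenever $e\geq 2$ (here $e>2\sqrt{b}\geq 2$). This handles the degree $d=b(G)$. For an arbitrary irreducible degree $d$ with $|G|=d(d+e_d)$, I note that $x\mapsto |G|/x-x$ is decreasing, so $d\leq b(G)$ forces $e_d\geq e$; since $x^4-x^3$ is increasing for $x\geq 1$ and $e_d\geq e\geq 3$, the bound $|G|<e^4-e^3\leq e_d^4-e_d^3$ survives for every such $d$.

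The only real content --- and the step I would be most careful about --- is the passage from $be=(ac)^2-b^2+ac\bigl(a\,e_{G/N}+c\,e_N+e_Ne_{G/N}\bigr)$ to $e\geq a+c+1$: all the leverage comes from discarding the nonnegative term $(ac)^2-b^2$, which is legitimate \emph{precisely} because of the hypothesis $b\leq ac$, together with the two ``free'' units $e_N,e_{G/N}\geq 1$ supplied by the nontriviality of $N$ and $G/N$. Everything else is bookkeeping plus the two elementary monotonicity and AM--GM facts.
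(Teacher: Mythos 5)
Your proof is correct and follows essentially the same route as the paper's: expand $|G|=|N|\,|G/N|$ in terms of $b(N),b(G/N),e_N,e_{G/N}$, discard the nonnegative term $(b(N)b(G/N))^2-b(G)^2$ using the hypothesis $b(G)\leq b(N)b(G/N)$, and finish with AM--GM to get $e>2\sqrt{b(G)}$. The only difference is cosmetic --- the paper applies AM--GM to $c\,e_N+a\,e_{G/N}\geq 2\sqrt{ac}$ rather than passing through $a+c+1$, and it leaves to the reader the monotonicity argument (for degrees $d<b(G)$, hence $e_d\geq e$) that you spell out explicitly at the end.
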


\begin{proof}
Write $|N| = b(N)(b(N) + e(N))$, $|G/N| = b(G/N)(b(G/N) + e(G/N))$,
and recall that $|G| = b(G)(b(G) + e)$. Then
\[b(G)(b(G) + e) = b(N) b(G/N) (b(N) + e(N))(b(G/N) + e(G/N)).\]
As $b(G)\leq b(N)b(G/N)$, we deduce that \begin{align*}e&\geq
e(N)e(G/N)+e(N)b(G/N)+b(N)e(G/N)\\
&>e(N)b(G/N)+b(N)e(G/N)\\
&\geq 2\sqrt{b(N)b(G/N)}\\
&\geq 2\sqrt{b(G)}.\end{align*} Note that, as both $N$ and $G/N$ are
nontrivial, $e(N)>0$ and $e(G/N)>0$. We now easily deduce that
$|G|<e^4-e^3$.
\end{proof}

\begin{corollary}\label{corollary-nonabelian-simple-groups}
Theorems~\ref{theorem-main} and \ref{theorem-main-3} are true for
every finite group which is direct product of non-abelian simple
groups. In particular, they are true for all characteristically
simple groups.
\end{corollary}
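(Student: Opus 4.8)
The plan is to write $G=S_1\times\cdots\times S_k$ with each $S_i$ a non-abelian simple group and to argue by cases on $k$, reducing everything to the two results just established. The one structural fact I need throughout is that the irreducible characters of a direct product are exactly the outer tensor products of irreducible characters of the factors, so that their degrees multiply; in particular $b(G)=b(S_1)\cdots b(S_k)$, and more generally $b(N)b(G/N)=b(G)$ whenever $N$ is the product of some of the $S_i$ and $G/N$ the product of the rest.

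First I would dispose of the base case $k=1$, i.e.\ $G=S$ simple, where Lemma~\ref{lemma-N-G/N} is unavailable because $S$ has no proper nontrivial normal subgroup. Here Theorem~\ref{theorem-nonabelian-simple-groups-1} supplies $|S|>2b(S)^2$ together with (for $|S|=d(d+e)$) the inequalities $e>d$ and $|S|<2e^2$. Then Theorem~\ref{theorem-main} follows because $e>d\geq 1$ forces $e\geq 2$, whence $e^4-e^3=e^2(e-2)(e+1)+2e^2\geq 2e^2>|S|$; and Theorem~\ref{theorem-main-3} follows because $|S|>2b(S)^2\geq b(S)\bigl(b(S)+\sqrt{b(S)}+1\bigr)$, the last inequality being equivalent to $b(S)\geq \sqrt{b(S)}+1$, which holds for every non-abelian simple group since $b(S)\geq 3$.

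Next, for $k\geq 2$ I would take $N=S_1$, a nontrivial proper normal subgroup with $G/N\cong S_2\times\cdots\times S_k$. Since $b(G)=b(N)b(G/N)$, the hypothesis $b(G)\leq b(N)b(G/N)$ of Lemma~\ref{lemma-N-G/N} holds, so that lemma immediately yields Theorem~\ref{theorem-main} for $G$ and, writing $|G|=b(G)(b(G)+e)$, the bound $e>2\sqrt{b(G)}$. As $b(G)>1$ we have $2\sqrt{b(G)}>\sqrt{b(G)}+1$, hence $|G|=b(G)(b(G)+e)>b(G)\bigl(b(G)+\sqrt{b(G)}+1\bigr)$, which is Theorem~\ref{theorem-main-3}.

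Finally I would observe that a non-abelian characteristically simple finite group is a direct product of isomorphic non-abelian simple groups, so the ``in particular'' assertion is the special case $S_1\cong\cdots\cong S_k$ of what has just been shown (and for abelian characteristically simple groups the hypothesis of a non-abelian minimal normal subgroup fails, so there is nothing to prove). The only point requiring any care is that Lemma~\ref{lemma-N-G/N} cannot cover $k=1$, which is precisely why the simple case must be extracted directly from Theorem~\ref{theorem-nonabelian-simple-groups-1}; everything else is a short verification of elementary inequalities together with the multiplicativity of the largest degree across direct factors.
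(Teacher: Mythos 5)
Your proposal is correct and follows essentially the same route as the paper: the paper's (one-line) proof likewise handles the simple case via Theorem~\ref{theorem-nonabelian-simple-groups-1} and the case of $k\geq 2$ factors via Lemma~\ref{lemma-N-G/N}, whose hypothesis $b(G)\leq b(N)b(G/N)$ holds by the multiplicativity of character degrees over direct products. Your write-up merely makes explicit the elementary verifications (such as $e^4-e^3\geq 2e^2$ for $e\geq 2$ and $2\sqrt{b(G)}>\sqrt{b(G)}+1$ for $b(G)>1$) that the paper leaves to the reader.
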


\begin{proof}
This follows from Theorem~\ref{theorem-nonabelian-simple-groups-1}
and Lemma~\ref{lemma-N-G/N}.
\end{proof}


\section{The case $S\ncong \PSL_2(q)$}\label{section:S not PSL}

With Theorem~\ref{theorem3/8} in hand, we are now ready to prove the
main results in the case $S\ncong \PSL_2(q)$. First, we recall the
following lemma, which will be frequently used from now on.

\begin{lemma}\label{lemma-extension}
Let $N=S\times\cdots\times S$, a direct product of copies of a
non-abelian simple group $S$, be a minimal normal subgroup of $G$.
Assume that $\theta\in\Irr(S)$ is extendible to $\Aut(S)$. Then the
product character $\psi:=\theta\times\cdots\times\theta\in\Irr(N)$
is extendible to $G$. Consequently, if $\chi\in\Irr(G)$ is an
extension of $\psi$, then there is a bijection $\beta\leftrightarrow
\beta\chi$ between $\Irr(G/N)$ and the set of irreducible characters
of $G$ lying above $\psi$.
\end{lemma}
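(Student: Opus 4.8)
The plan is to use the wreath-product description of $\Aut(N)$ together with an explicit tensor-power representation to produce the extension, and then to read off the counting statement from Gallagher's theorem. Since $S$ is non-abelian simple and $N\cong S^k$, we have $\Aut(N)\cong \Aut(S)\wr \Sy_k$, and the conjugation action of $G$ on $N$ yields a homomorphism $\phi\colon G\to \Aut(N)$ with kernel $\bC_G(N)$. Because $Z(S)=1$, the subgroup $N$ maps isomorphically onto $\Inn(N)=\Inn(S)^k\leq\Aut(S)^k$, with each $\Inn(S_i)$ identified with the $i$-th direct factor $S$ of $\Aut(S)^k$. Thus it suffices to extend $\psi$, viewed on $\Inn(N)$, to $\Im(\phi)=G/\bC_G(N)$ and then pull back along $\phi$.

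First I would construct an extension of $\psi$ all the way up to $\Aut(N)$. Let $\hat\theta\in\Irr(\Aut(S))$ be the given extension of $\theta$, afforded by a module $W$, and form $W^{\otimes k}$. The wreath product $\Aut(S)\wr\Sy_k=\Aut(S)^k\rtimes\Sy_k$ acts on $W^{\otimes k}$ by letting $\Aut(S)^k$ act factor-wise and $\Sy_k$ permute the tensor factors; one checks that the semidirect-product relations hold, so this is a genuine representation of $\Aut(N)$. Its restriction to $\Aut(S)^k$ affords $\hat\theta\times\cdots\times\hat\theta$, and hence its restriction to $\Inn(N)$ affords $\theta\times\cdots\times\theta=\psi$ under the identification above. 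Restricting this representation to $\Im(\phi)$ and pulling back along $\phi$ produces a character $\chi_0\in\Irr(G)$ with $(\chi_0)_N=\psi$; since $\chi_0(1)=\psi(1)$ and $\psi$ is irreducible, $\chi_0$ is itself irreducible, giving the desired extension of $\psi$ to $G$.

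For the final assertion, let $\chi\in\Irr(G)$ be any extension of $\psi$. Then $N\trianglelefteq G$ and $\chi_N=\psi\in\Irr(N)$, so Gallagher's theorem applies: the characters $\beta\chi$ with $\beta\in\Irr(G/N)$ (inflated to $G$) are irreducible, pairwise distinct for distinct $\beta$, and constitute exactly the irreducible constituents of $\psi^{G}$, that is, exactly the irreducible characters of $G$ lying above $\psi$. This yields the claimed bijection $\beta\leftrightarrow\beta\chi$ between $\Irr(G/N)$ and the characters of $G$ lying above $\psi$.

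The hard part will be the verification in the second paragraph: confirming that the factor-wise action of $\Aut(S)^k$ and the permutation action of $\Sy_k$ on $W^{\otimes k}$ are compatible enough to define an honest representation of the semidirect product $\Aut(N)$, and then carefully tracking the identification of $N$ with $\Inn(S)^k$ inside $\Aut(S)^k$ so that the restriction of this representation to $N$ is \emph{exactly} $\psi$ rather than some twist of it. Everything else is formal: the identification $\Aut(N)\cong\Aut(S)\wr\Sy_k$ for non-abelian simple $S$ is standard, and the bijection in the last paragraph is an immediate invocation of Gallagher's theorem once the extension is in hand.
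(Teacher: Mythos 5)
Your proposal is correct, but it follows a different route from the paper in one specific sense: the paper does not prove the extension statement at all, instead citing it as well known (see \cite[Lemma~5]{Bianchi-Lewis} or \cite[Lemma~1]{Moreto-Nguyen}) and then, exactly as you do, deducing the bijection from Gallagher's theorem \cite[Corollary~6.17]{Isaacs1}. What you supply in addition is a self-contained proof of that cited extension result, and your construction is sound: since $\bZ(S)=1$ one has $\Aut(N)\cong\Aut(S)\wr\Sy_k$; the tensor-power action of $\Aut(S)^k\rtimes\Sy_k$ on $W^{\otimes k}$ does satisfy the semidirect-product compatibility (a routine check that conjugating the factor-wise action by a permutation of the tensor slots permutes the components accordingly); its restriction to $\Inn(S)^k$ is exactly $\theta\times\cdots\times\theta$ because $\hat\theta$ restricts to $\theta$ on $\Inn(S)\cong S$; and since $\phi|_N\colon N\to\Inn(N)$ is the canonical isomorphism (again because $\bZ(N)=1$), the pullback $\chi_0$ satisfies $(\chi_0)_N=\psi$ on the nose rather than a twist, whence $\chi_0$ is irreducible because $\psi$ is. This is in fact the standard argument underlying the references the paper cites, so the mathematics is essentially the same; what your version buys is self-containedness, and it also makes visible that minimality of $N$ is never used (normality of $N\cong S^k$ with $S$ non-abelian simple suffices), while the paper's version buys brevity by outsourcing the construction to the literature.
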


\begin{proof}
The first statement of the lemma is well known (see for instance
\cite[Lemma~5]{Bianchi-Lewis} or \cite[Lemma~1]{Moreto-Nguyen}). The
second statement follows by Gallagher's theorem, see
\cite[Corollary~6.17]{Isaacs1}.
\end{proof}

\begin{theorem}\label{theorem-S-ncongPSL(2,q)}
Let $G$ be a finite group with a minimal normal subgroup
$N=S\times\cdots\times S$, where $S$ is a non-abelian simple group
different from $\PSL_2(q)$ for every prime power $q$. Let
$|G|=b(G)(b(G)+e)$. Then $e>\sqrt{b(G)}+1$ and, in particular, $|G|<
e^4-e^3$.
\end{theorem}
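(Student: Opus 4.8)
The plan is to combine the two main tools from the preceding sections: Theorem~\ref{theorem3/8}, which gives a character of $S$ extendible to $\Aut(S)$ of degree exceeding $|S|^{3/8}$, and Theorem~\ref{theorem-nonabelian-simple-groups-1} (together with the results on $\varepsilon(S)>1$), which controls how much of the order of $S$ is concentrated in the top degree. Write $N=S^k$ and let $\theta\in\Irr(S)$ be the extendible character from Theorem~\ref{theorem3/8}, so $\theta(1)>|S|^{3/8}$. By Lemma~\ref{lemma-extension}, the product $\psi=\theta\times\cdots\times\theta\in\Irr(N)$ extends to some $\chi\in\Irr(G)$, and the characters of $G$ above $\psi$ are exactly the $\beta\chi$ for $\beta\in\Irr(G/N)$. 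In particular $b(G)\geq\chi(1)=\psi(1)^{\phantom{1}}\!\cdot\!\big(\text{ratio}\big)$; more precisely $\chi(1)=\theta(1)^k\geq|S|^{3k/8}$, so
\[
b(G)\geq |S|^{3k/8}=|N|^{3/8}.
\]

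First I would use this lower bound on $b(G)$ to get a good \emph{lower} bound on $|G|/b(G)$, which is what ultimately controls $e$. Recall $|G|=b(G)(b(G)+e)$, so $e=|G|/b(G)-b(G)$, and the target $e>\sqrt{b(G)}+1$ is equivalent to $|G|>b(G)(b(G)+\sqrt{b(G)}+1)$, which is exactly Theorem~\ref{theorem-main-3} for this $G$. The key numerical input is that $S\ncong\PSL_2(q)$ forces $\theta(1)$ to be genuinely large: since $b(G)\geq\theta(1)^k\geq|S|^{3k/8}=|N|^{3/8}$, I would compare this with $|G|\geq|N|\cdot|G/N|\geq|N|$ (using $C_G(N)=1$ since $N$ is a non-abelian minimal normal subgroup, so $|G|$ is bounded below by $|N|$ times the number of characters of $G/N$ contributing). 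The aim is to show $b(G)^2$ is small relative to $|G|$: concretely, $b(G)\geq|N|^{3/8}$ gives $b(G)^{8/3}\geq|N|$, hence $|G|\geq|N|\geq b(G)^{8/3}=b(G)^2\cdot b(G)^{2/3}$, and since $b(G)^{2/3}$ grows, $|G|/b(G)^2\to\infty$; this is far more than enough to beat $\sqrt{b(G)}+1$.

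The step I expect to be the main obstacle is making the bookkeeping with $\Irr(G/N)$ precise enough, because $b(G)$ could a priori be realized by a character \emph{not} lying above $\psi$, so the inequality $b(G)\geq\theta(1)^k$ is safe but the reverse control — bounding $b(G)$ from \emph{above} to ensure $b(G)^2$ stays below $|G|$ — is the delicate part. The natural route is to invoke $\varepsilon(S)>1$ (equivalently the conclusion $|S|>2b(S)^2$ of Theorem~\ref{theorem-nonabelian-simple-groups-1}) applied factorwise, together with the fact that $b(G)\leq b(N)\,b(G/N)=b(S)^k\,b(G/N)$ and $b(S)^2<|S|/2$. Combining $|G|=|N|\,|G/N|\geq(2b(S)^2)^k|G/N|$ with $b(G)^2\leq b(S)^{2k}b(G/N)^2$ and the elementary bound $b(G/N)^2\leq|G/N|$ should yield $|G|/b(G)^2>2^k\geq 4$, and then the sharper constant $\sqrt{b(G)}+1$ follows by feeding the large lower bound $b(G)\geq|N|^{3/8}$ back in so that $\sqrt{b(G)}+1$ is negligible compared to $|G|/b(G)-b(G)$. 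The only real care needed is the borderline small cases (small $k$, or $|G/N|$ close to trivial), where I would check the inequality directly using that $S\ncong\PSL_2(q)$ makes $|S|/b(S)^2$ comfortably larger than $2$.
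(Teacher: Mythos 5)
There are two genuine gaps here, and each one breaks the proposal. First, in your second paragraph the central inequality chain runs backwards: from $b(G)\geq|N|^{3/8}$ you correctly get $b(G)^{8/3}\geq|N|$, but then you write ``$|G|\geq|N|\geq b(G)^{8/3}$'', which asserts the reverse inequality $|N|\geq b(G)^{8/3}$; that would require an \emph{upper} bound $b(G)\leq|N|^{3/8}$, which is not available (indeed $b(G)>|N|^{3/8}$ holds strictly, so $|N|<b(G)^{8/3}$). A lower bound on $b(G)$ can never, by itself, show that $b(G)^2$ is small relative to $|G|$, which is what $e>\sqrt{b(G)}+1$ amounts to. Second, the repair in your final paragraph rests on the inequality $b(G)\leq b(N)\,b(G/N)$, treated as a known fact. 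It is not one, and it is false for groups in general: if $S$ has two distinct irreducible characters $\theta_1\neq\theta_2$ of its largest degree (e.g.\ $\PSL_2(8)$ has three of degree $9$), then in $G=(S\times S)\rtimes C_2$ the character $\theta_1\times\theta_2$ has inertia group $S\times S$ and induces irreducibly to a character of degree $2b(S)^2>b(S)^2=b(N)b(G/N)$. This is precisely why the paper's Lemma~\ref{lemma-N-G/N} takes $b(G)\leq b(N)b(G/N)$ as a \emph{hypothesis} rather than a fact, and why the actual proof is a dichotomy rather than a direct estimate.

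The missing idea is the sum-of-squares count over the fiber of characters lying above $\psi$. Since $\chi(1)b(G/N)$ is a character degree of $G$, either $b(G)=\chi(1)b(G/N)$ --- in which case $b(G)\leq b(N)b(G/N)$ does hold and Lemma~\ref{lemma-N-G/N} finishes the proof --- or else \emph{every} irreducible character of $G$ lying over $\psi$ has degree strictly smaller than $b(G)$. In the latter case the character achieving $b(G)$ lies outside the fiber $\{\beta\chi:\beta\in\Irr(G/N)\}$, so decomposing $|G|=\sum_{\chi'\in\Irr(G)}\chi'(1)^2$ gives
\[
b(G)e=|G|-b(G)^2\;\geq\;\sum_{\beta\in\Irr(G/N)}\bigl(\chi(1)\beta(1)\bigr)^2=\chi(1)^2|G/N|>|N|^{3/4}|G/N|=\frac{|G|}{|N|^{1/4}},
\]
and, after disposing of the case $G=N$ via Corollary~\ref{corollary-nonabelian-simple-groups}, the estimates $|G|/|N|^{1/4}>|G|^{3/4}+|G|^{1/2}$ and $b(G)\leq|G|^{1/2}$ yield $e>\sqrt{b(G)}+1$. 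Note that here the large degree from Theorem~\ref{theorem3/8} is exploited because the \emph{whole fiber} above $\psi$ contributes $\chi(1)^2|G/N|$ to $|G|$ while contributing nothing to $b(G)^2$; your proposal never uses this, and Theorem~\ref{theorem-nonabelian-simple-groups-1}, which you want to apply factorwise, enters this part of the paper only through the case $G=N$, not through any upper bound on $b(G)$ in terms of $b(N)$ and $b(G/N)$.
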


\begin{proof}
Let $\theta$ be a character of $S$ found in
Theorem~\ref{theorem3/8}, i.e. $\theta$ is extendible to $\Aut(S)$
and $\theta(1)>|S|^{3/8}$. Let $\psi:=\theta\times\dots \times
\theta\in\Irr(N)$. Using Lemma~\ref{lemma-extension}, we deduce that
$\psi$ is extended to a character $\chi\in \Irr(G)$ and the mapping
$\beta\mapsto \beta\chi$ is a bijection between $\Irr(G/N)$ and the
set of irreducible characters of $G$ lying above $\psi\in \Irr(N)$.
This implies in particular that $\chi(1)b(G/N)$ is a character
degree of $G$, and whence $b(G)\geq \chi(1)b(G/N)$.

If $b(G)=\chi(1)b(G/N)$, then $b(G)\leq b(N)b(G/N)$ and we are done
by Lemma~\ref{lemma-N-G/N}. So for the rest of the proof we assume
that $b(G)>\chi(1)b(G/N)$. This means that the degree of any
irreducible character of $G$ lying above $\psi$ is less than $b(G)$.
We therefore deduce that
\[b(G)e=|G|-b(G)^2\geq \sum_{\beta\in\Irr(G/N)} (\chi(1)\beta(1))^2=\chi(1)^2|G/N|.\]
Using the fact that $\chi(1)=\theta(1)^k>|S|^{3k/8}=|N|^{3/8}$, we
then obtain
\[b(G)e>|N|^{3/4}|G/N|=|G|/|N|^{1/4}.\]

As the case $G=N$ has been already handled in
Corollary~\ref{corollary-nonabelian-simple-groups}, we may assume
that $|G/N|\geq 2$. Also note that $|G|\geq 2|N|\geq 5\,040$. We now
easily see that $|G|/|N|^{1/4}> |G|^{3/4}+|G|^{1/2}$. This and the
above inequality imply that
\[b(G)e>|G|^{3/4}+|G|^{1/2}.\]
Since $b(G)\leq |G|^{1/2}$, it follows that
\[b(G)e>b(G)^{3/2}+b(G),\] or equivalently
\[e>b(G)^{1/2}+1.\]
This implies that $b(G)<e^2-e$, which in turn implies that
\[|G|=b(G)(b(G)+e)<(e^2-e)e^2=e^4-e^3,\] and the theorem is
completely proved.
\end{proof}


\section{The case $S\cong \PSL_2(q)$ with $q$ even}
\label{section:S=PSL even}

Characters of the linear groups in dimension $2$ are well known and
we will use \cite{White} as the main source. In particular, we will
follow the notation there.

According to \cite[p.~8]{White}, when $q$ is even,
$\SL_2(q)\cong\PSL_2(q)$ has the following irreducible characters
\begin{enumerate}
\item[(i)] $1_{\SL_2(q)}$ of degree 1,
\item[(ii)] $\St_{\SL_2(q)}$ of degree $q$,
\item[(iii)] $\chi_i, 1\leq i\leq (q-2)/2$, of degree $q+1$, and
\item[(iv)] $\theta_j, 1\leq j\leq q/2$, of degree $q-1$.
\end{enumerate}

Let $q=2^f$ and $\varphi$ the field automorphism of order $f$ of
$\SL_2(q)$. Then, by \cite[Lemma~4.8]{White}, the character
$\chi_i\in\Irr(\SL_2(q))$ is invariant under $\varphi^k$ where
$1\leq k\leq f$ if and only if $(2^f-1)\mid i(2^k-1)$ or
$(2^f-1)\mid i(2^k+1)$; and the character
$\theta_j\in\Irr(\SL_2(q))$ is invariant under $\varphi^k$ if and
only if $(2^f+1)\mid j(2^k-1)$ or $(2^f+1)\mid j(2^k+1)$. Using
this, we can deduce that $\SL_2(2^f)$ has a non-principal
irreducible character besides the Steinberg character that is
extendible to $\Aut(\SL_2(2^f))$.

\begin{lemma}\label{lemma-theta-q-1-q+1}
The simple groups $\SL_2(q)$ with $q\geq 8$ even always have an
irreducible character $\theta$ of degree $q-1$ or $q+1$ such that
$\theta$ is extendible to $\Aut(\SL_2(q))$.
\end{lemma}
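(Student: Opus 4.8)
The plan is to use the Aut-invariance criteria recorded just before the statement, applied with the specific cyclotomic structure of $q=2^f$, to produce for every $f\geq 3$ a character $\chi_i$ of degree $q+1$ or a character $\theta_j$ of degree $q-1$ that survives all the field automorphisms $\varphi^k$. Since $\Aut(\SL_2(2^f))$ is the cyclic group $\langle\varphi\rangle$ of order $f$ (the diagonal and graph automorphisms being trivial or absent in even characteristic), a character that is invariant under $\varphi$ is invariant under all of $\langle\varphi\rangle$, and because $\langle\varphi\rangle$ is cyclic, invariance immediately upgrades to extendibility (the obstruction in $H^2$ of a cyclic group vanishes). So the real content is purely a number-theoretic existence statement: find $i$ with $1\leq i\leq(q-2)/2$ satisfying $(2^f-1)\mid i(2^k-1)$ or $(2^f-1)\mid i(2^k+1)$ for the relevant $k$, or find $j$ with $1\leq j\leq q/2$ satisfying the analogous divisibility with $2^f+1$.

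First I would reduce the invariance-under-all-of-$\langle\varphi\rangle$ condition to invariance under the single generator $\varphi$, i.e.\ take $k=1$, so that the two families of conditions become $(2^f-1)\mid i\cdot 1$ or $(2^f-1)\mid 3i$ for the $\chi_i$, and $(2^f+1)\mid j$ or $(2^f+1)\mid 3j$ for the $\theta_j$. The first family is hopeless in the allowed range since $i\leq(q-2)/2<2^f-1$, so I would focus on the $\theta_j$ of degree $q-1$. Here I want $(2^f+1)\mid 3j$ for some $j$ in the range $1\leq j\leq q/2=2^{f-1}$. The natural candidate is to solve $3j\equiv 0\pmod{2^f+1}$: when $3\mid 2^f+1$ (equivalently $f$ odd) one may take $j=(2^f+1)/3$, and one checks $(2^f+1)/3\leq 2^{f-1}$ for all $f\geq 3$, so $\theta_j$ is $\varphi$-invariant and hence extendible. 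This disposes of the odd-$f$ case cleanly.

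The remaining obstacle, and what I expect to be the delicate part, is even $f$, where $3\nmid 2^f+1$, so neither $(2^f+1)\mid j$ nor $(2^f+1)\mid 3j$ has a solution in the admissible range via this simple device. In that situation I would instead exploit a nontrivial proper divisor of $f$: if $f=2e$ then $\varphi$ has order $f$, and I would look for a $\theta_j$ whose stabilizer is all of $\langle\varphi\rangle$ by using the $(2^k\pm1)$ with $k$ a divisor of $f$ rather than only $k=1$. Concretely, I would analyze the full orbit structure of $\langle\varphi\rangle$ on the set of $q/2$ characters $\theta_j$: the group $\langle\varphi\rangle$ permutes these via $j\mapsto 2j$ and $j\mapsto -j$ (reading indices modulo $2^f+1$), so a fixed point is an index $j$ with $\{2^kj,-2^kj\}$ meeting $\{j\}$ modulo $2^f+1$ for all $k$, i.e.\ a $j$ with small orbit. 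Counting shows the number of $\theta_j$ is $2^{f-1}$ while the automorphism group has order $f$, and for $q\geq 8$ one has $2^{f-1}>f\cdot(\text{number of large orbits})$ forcing a short orbit, equivalently an invariant $\theta_j$; making this orbit-counting argument precise (or, alternatively, exhibiting an explicit invariant $j$ such as one built from a divisor $d\mid f$ with $2^d+1\mid 2^f+1$) is where the careful bookkeeping lies.

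In writing up, I would therefore split into the cases $f$ odd and $f$ even, handle the odd case by the explicit choice $j=(2^f+1)/3$, and handle the even case by either the orbit-counting pigeonhole on the $\theta_j$ or an explicit construction from a proper divisor of $f$, in each case verifying that the resulting index lands in the permitted range and invoking the cyclicity of $\langle\varphi\rangle$ to pass from $\varphi$-invariance to extendibility to $\Aut(\SL_2(q))$. The lower bound $q\geq 8$ (i.e.\ $f\geq 3$) is exactly what is needed to guarantee the range inequalities and to rule out the degenerate small cases where the only invariant character is the Steinberg character itself.
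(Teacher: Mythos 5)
Your odd-$f$ case is exactly the paper's argument (take $j=(2^f+1)/3$ so that $(2^f+1)\mid 3j$, check the range, and use that $\Aut(\SL_2(2^f))=\SL_2(2^f)\rtimes\langle\varphi\rangle$ with cyclic quotient to pass from invariance to extendibility). But the even-$f$ case contains a genuine gap, and in fact your proposed strategy there cannot be repaired. You dismiss the degree-$(q+1)$ family as ``hopeless'' on the grounds that $i\leq (q-2)/2<2^f-1$, but that reasoning only rules out the condition $(2^f-1)\mid i$; it says nothing about the other condition $(2^f-1)\mid 3i$. When $f$ is even we have $3\mid 2^f-1$, and $i=(2^f-1)/3$ satisfies $(2^f-1)\mid 3i$ and lies in the admissible range $1\leq i\leq (q-2)/2$ (since $2(2^f-1)\leq 3(2^f-2)$ for $f\geq 2$). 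This is precisely what the paper does: for even $f$ the invariant character is $\chi_{(2^f-1)/3}$ of degree $q+1$, not a $\theta_j$.

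Worse, the fallback you propose for even $f$ --- finding a $\varphi$-invariant $\theta_j$ by orbit counting or by an explicit construction from a divisor of $f$ --- is doomed, because no such character exists. By the very criterion you quote, $\theta_j$ is $\varphi$-invariant iff $(2^f+1)\mid j$ or $(2^f+1)\mid 3j$; since $1\leq j\leq 2^{f-1}<2^f+1$ and, for even $f$, $\gcd(3,2^f+1)=1$, both conditions force $j\equiv 0\pmod{2^f+1}$, which is impossible. (Invariance under the generator $\varphi$ is necessary for invariance under $\langle\varphi\rangle$, so checking $k=1$ already kills every $\theta_j$.) Your pigeonhole sketch also does not do what you want: a small group acting on a large set guarantees many orbits, not a singleton orbit, and here there are provably no singleton orbits. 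The correct repair is simply to restore the $\chi_i$ family for even $f$ as above, which is the paper's proof.
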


\begin{proof}
Assume that $q=2^f$ with $f\geq 3$. From the above discussion, we
observe that when $f$ is odd then $3\mid (2^f+1)$ and
$\theta_{(2^f+1)/3}$ is invariant under $\varphi$. On the other
hand, when $f$ is even then $3\mid (2^f-1)$ and $\chi_{(2^f-1)/3}$
is invariant under $\varphi$. So in any case, there is always an
irreducible character $\theta\in\Irr(\SL_2(q))$ of degree $q-1$ or
$q+1$ such that $\theta$ is invariant in $\Aut(\SL_2(q))$. Note that
$\Aut(\SL_2(2^f))=\SL_2(2^f)\rtimes \langle \varphi\rangle$. Thus
$\theta$ is extendible to $\Aut(\SL_2(q))$, as wanted.
\end{proof}

\begin{lemma}\label{lemma-b(G)}
Let $N=\PSL_2(q)\times\cdots\times \PSL_2(q)$, a direct product of
$k$ copies of the simple linear group $\PSL_2(q)$, is a normal
subgroup of $G$. Then
\[
b(G)\leq\min\{|G|^{1/2},(q+1)^k|G/N|\}.
\]
\end{lemma}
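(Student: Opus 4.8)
The plan is to establish the two bounds separately. The first bound $b(G)\le |G|^{1/2}$ is the standard fact that $\sum_{\chi\in\Irr(G)}\chi(1)^2=|G|$ forces $b(G)^2\le|G|$; this requires nothing about the structure of $N$.

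For the second bound I would argue via Clifford theory. Fix $\chi\in\Irr(G)$ with $\chi(1)=b(G)$ and let $\theta\in\Irr(N)$ be an irreducible constituent of the restriction $\chi_N$. Writing $T=I_G(\theta)$ for the inertia group of $\theta$ in $G$, Clifford theory gives $\chi(1)=|G:T|\,e\,\theta(1)$, where $e=\psi(1)/\theta(1)$ is the ramification index of the Clifford correspondent $\psi\in\Irr(T)$ lying over $\theta$. Since $\sum_{\psi\in\Irr(T\mid\theta)}(\psi(1)/\theta(1))^2=|T:N|$, we have $e^2\le|T:N|$, and as $e\ge 1$ this forces $e\le|T:N|$; combined with the tower identity $|G:T|\,|T:N|=|G:N|$ this yields the clean inequality
\[\chi(1)\le |G:N|\,\theta(1).\]

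It remains to bound $\theta(1)$. Since $N$ is a direct product of $k$ copies of $\PSL_2(q)$, its irreducible characters are outer tensor products of characters of the factors, so $b(N)=b(\PSL_2(q))^k$. The character degrees of $\PSL_2(q)$ recalled at the start of this section (together with their analogues for odd $q$) give $b(\PSL_2(q))\le q+1$, whence $\theta(1)\le b(N)\le (q+1)^k$. Substituting into the displayed inequality gives $b(G)=\chi(1)\le (q+1)^k\,|G/N|$, and combining this with the first bound proves the claim.

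I do not anticipate a serious obstacle here: the only mildly delicate points are the Clifford-theoretic inequality $\chi(1)\le|G:N|\,\theta(1)$ (which is entirely standard and could alternatively be cited directly from \cite{Isaacs1}) and the identification $b(N)=b(\PSL_2(q))^k$ for the direct product $N$, which follows from the fact that irreducible characters of a direct product are precisely the outer tensor products of irreducible characters of the factors.
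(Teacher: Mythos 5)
Your proof is correct and is essentially the same argument the paper uses: the paper dismisses both bounds as ``clear,'' with the second resting implicitly on the standard Clifford-theoretic fact $b(G)\leq b(N)\,|G/N|$ together with $b(N)=b(\PSL_2(q))^k\leq (q+1)^k$, which is exactly what you spell out. Your only addition is to make the routine Clifford theory (inertia group, ramification, $\sum e_i^2=|T:N|$) explicit, which the paper leaves to the reader.
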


\begin{proof}
It is clear that $b(G)\leq |G|^{1/2}$, so it remains to show that
$b(G)\leq (q+1)^k|G/N|$. But this is also clear since
$b(\PSL_2(q))\leq q+1$ for every prime power $q\geq5$.
\end{proof}

We are now ready to prove Theorems~\ref{theorem-main} and
\ref{theorem-main-3} in the case $S\cong \PSL_2(q)$ with $q$ even.

\begin{theorem}\label{theorem-S-congPSL(2,q)q-even}
Assume that $N=\PSL_2(q)\times\cdots\times \PSL_2(q)$, a direct
product of $k$ copies of $\PSL_2(q)$ where $q\geq8$ is even, is a
minimal normal subgroup of a finite group $G$. Let
$|G|=b(G)(b(G)+e)$. Then $e>\sqrt{b(G)}+1$ and, in particular, $|G|<
e^4-e^3$.
\end{theorem}

\begin{proof}
Let $\theta\in\Irr(\SL_2(q))$ be an irreducible character of degree
$q-1$ or $q+1$ such that $\theta$ is extendible to $\Aut(\SL_2(q))$,
as its existence is guaranteed by Lemma~\ref{lemma-theta-q-1-q+1}.
Using Lemma~\ref{lemma-extension}, we obtain a bijection
$\beta\leftrightarrow \beta\chi$ between $\Irr(G/N)$ and the set of
irreducible characters of $G$ lying above
$\theta\times\cdots\times\theta\in\Irr(N)$, where $\chi$ is an
extension of $\theta\times\cdots\times\theta$ to $G$.

Consider the case $b(G)=\chi(1)b(G/N)$. We then have $b(G)\leq
b(N)b(G/N)$ and as in the proof of
Theorem~\ref{theorem-S-ncongPSL(2,q)}, we are done by
Lemma~\ref{lemma-N-G/N}. So we can assume that $b(G)>\chi(1)b(G/N)$.
In other words, all the irreducible characters of $G$ lying above
$\theta\times\cdots\times\theta\in\Irr(N)$ have degree smaller than
$b(G)$.

Repeat the above arguments for the Steinberg character
$\St_{\SL_2(q)}$ in place of $\theta$, we also can assume that all
irreducible characters of $G$ lying above
$\St_{\SL_2(q)}\times\cdots\times\St_{\SL_2(q)}\in\Irr(N)$ have
degree smaller than $b(G)$. Note that these characters are of the
form $\beta\chi_1$ where $\beta\in\Irr(G/N)$ and $\chi_1$ is an
extension of
$\St_{\SL_2(q)}\times\cdots\times\St_{\SL_2(q)}\in\Irr(N)$ to $G$.

The conclusions of the last two paragraphs imply that
\begin{align*}
b(G)e=|G|-b(G)^2&> \sum_{\beta\in\Irr(G/N)}
(\beta(1)^2\chi(1)^2+\beta(1)^2\chi_1(1)^2)\\
&=(\chi(1)^2+\chi_1(1)^2)|G/N|\\
&\geq ((q-1)^{2k}+q^{2k})|G/N|.
\end{align*}
It is straightforward to check that \[((q-1)^{2k}+q^{2k})|G/N|\geq
|G|^{3/4}+|G|^{1/2}\] if $|G/N|\geq q^k$, and
\[((q-1)^{2k}+q^{2k})|G/N|\geq
(q+1)^{3k/2}|G/N|^{3/2}+(q+1)^k|G/N|\] if $|G/N|< q^k$. Therefore,
it follows from Lemma~\ref{lemma-b(G)} that
\[((q-1)^{2k}+q^{2k})|G/N|\geq b(G)^{3/2}+b(G).\] We finally deduce
that $b(G)e> b(G)^{3/2}+b(G)$, and the desired inequality follows.
\end{proof}


\section{The case $S\cong \PSL_2(q)$ with $q$ odd}
\label{section:S=PSLodd}

We now turn to the most complicated case, namely $S\cong\PSL_2(q)$
with odd $q$. This will be achieved in
Theorem~\ref{theorem-S-congPSL(2,q)1} and
Theorem~\ref{theorem-S-congPSL(2,q)2}.

\begin{theorem}\label{theorem-S-congPSL(2,q)1}
Assume that $N=\PSL_2(q)\times\cdots\times \PSL_2(q)$, a direct
product of $k$ copies of $\PSL_2(q)$ where $q\geq5$ is an odd prime
power, is a minimal normal subgroup of a finite group $G$ such that
$|G/N|\geq q^k$. Let $|G|=b(G)(b(G)+e)$. Then $e>\sqrt{b(G)}+1$ and,
in particular, $|G|< e^4-e^3$.
\end{theorem}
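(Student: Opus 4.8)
The plan is to run the same machine as in Theorem~\ref{theorem-S-ncongPSL(2,q)}, but with the Steinberg character $\St_{\PSL_2(q)}$ playing the role of the large extendible character $\theta$. Unlike a general simple group, $\PSL_2(q)$ has no character of degree exceeding $|S|^{3/8}$ that we may feed into Theorem~\ref{theorem3/8}; however, $\St_{\PSL_2(q)}$ has degree $q$ and is extendible to $\Aut(\PSL_2(q))$ (by the general extendibility of Steinberg characters of groups of Lie type recalled in the proof of Theorem~\ref{theorem3/8}), and the hypothesis $|G/N|\geq q^k$ is exactly what compensates for $q$ being smaller than $|S|^{3/8}$. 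So I would first let $\chi\in\Irr(G)$ be the extension of $\psi:=\St_{\PSL_2(q)}\times\cdots\times\St_{\PSL_2(q)}\in\Irr(N)$ produced by Lemma~\ref{lemma-extension}, so that $\chi(1)=q^k$ and $\beta\mapsto\beta\chi$ is a bijection from $\Irr(G/N)$ onto the irreducible characters of $G$ lying above $\psi$. In particular $\chi(1)b(G/N)=q^kb(G/N)$ is a character degree of $G$, so $b(G)\geq q^kb(G/N)$.

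If equality holds, then since $b(N)=(q+1)^k\geq q^k$ we get $b(G)=q^kb(G/N)\leq b(N)b(G/N)$, and Lemma~\ref{lemma-N-G/N} finishes the argument (in fact yielding the stronger $e>2\sqrt{b(G)}$, which implies $e>\sqrt{b(G)}+1$ since $b(G)>1$). Hence I may assume $b(G)>q^kb(G/N)$, so that every irreducible character of $G$ lying above $\psi$ has degree strictly less than $b(G)$. Writing $d=b(G)$ as in the statement and summing squares of degrees of these distinct characters, I obtain
\[b(G)e=|G|-b(G)^2\geq \sum_{\beta\in\Irr(G/N)}(\chi(1)\beta(1))^2=q^{2k}|G/N|.\]

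The crux is then the purely elementary inequality
\[q^{2k}|G/N|\geq |G|^{3/4}+|G|^{1/2},\]
which I would verify using $|N|=(q(q^2-1)/2)^k<q^{3k}$ and the hypothesis $|G/N|\geq q^k$. Writing $|G|^{3/4}=|N|^{3/4}|G/N|^{3/4}$ and estimating $|N|^{3/4}<q^{9k/4}\leq q^{2k}|G/N|^{1/4}$ gives $|G|^{3/4}<q^{2k}|G/N|$ with a definite multiplicative gap (the saving factor $2^{3k/4}$ coming precisely from the order $q(q^2-1)/2$ of $\PSL_2(q)$ in odd characteristic), and this gap comfortably absorbs the lower-order term $|G|^{1/2}$ once $|G|\geq|N|\geq 60$. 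From $b(G)e>|G|^{3/4}+|G|^{1/2}$ together with $b(G)\leq |G|^{1/2}$ I then get $b(G)e>b(G)^{3/2}+b(G)$, i.e. $e>\sqrt{b(G)}+1$; this forces $b(G)<e^2-e$ and hence $|G|=b(G)(b(G)+e)<(e^2-e)e^2=e^4-e^3$.

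The decisive point, and the reason the odd case is split at $|G/N|=q^k$, is that the Steinberg degree $q$ is by itself too small to push the argument through when $|G/N|$ is small; the assumption $|G/N|\geq q^k$ is exactly what makes $q^{2k}|G/N|$ dominate $|G|^{3/4}$. I expect the only delicate step to be checking the key inequality uniformly over all odd $q\geq 5$ and all $k\geq 1$, where one must exploit the factor $1/2$ in $|\PSL_2(q)|$ for odd $q$ — precisely the slack that is \emph{absent} in the even case (Theorem~\ref{theorem-S-congPSL(2,q)q-even}), which is why that case required a second extendible character of degree $q\pm1$ alongside the Steinberg character.
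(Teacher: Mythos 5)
Your proof is correct, and its decisive second half takes a genuinely different --- and leaner --- route than the paper's. Up through the bound $b(G)e=|G|-b(G)^2\geq q^{2k}|G/N|$ the two arguments coincide: same extension of the Steinberg product character via Lemma~\ref{lemma-extension}, same disposal of the equality case $b(G)=q^kb(G/N)$ via Lemma~\ref{lemma-N-G/N}. But the paper never tries to make $q^{2k}|G/N|$ alone dominate $|G|^{3/4}+|G|^{1/2}$. Instead it manufactures a second family of characters: setting $M=S_2\times\cdots\times S_k$ and $T=\bN_G(M)$, it extends $1_{S_1}\times\St_{S_2}\times\cdots\times\St_{S_k}$ to $T$ (via Mattarei's wreath-product lemma), induces to $G$ by Clifford theory, runs an additional case division on whether $b(G)=kq^{k-1}b(T/N)$, and adds the resulting contribution $kq^{2(k-1)}|G/N|$ to the lower bound for $b(G)e$, splitting the target into $q^{2k}|G/N|>|G|^{3/4}$ and $kq^{2(k-1)}|G/N|>|G|^{1/2}$; throughout, it uses only the crude estimate $|N|<q^{3k}$, which leaves no quantified slack in the first of these inequalities. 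Your single-family argument, which extracts the factor $2^{3k/4}$ from $|N|<q^{3k}/2^k$ (valid precisely because $q$ is odd) and lets the gap $2^{3k/4}-1\geq 2^{3/4}-1$ absorb $|G|^{1/2}$ once $|G|^{1/4}\bigl(2^{3/4}-1\bigr)>1$ (clear, since $|G|\geq 60q\geq 300$), eliminates the subgroup $T$, the wreath-product extension, the Clifford induction, and the extra case division. It also quietly repairs a soft spot in the paper: for $k=1$ the paper's claimed inequality $kq^{2(k-1)}|G/N|>|G|^{1/2}$ reads $|G/N|>|N|^{1/2}|G/N|^{1/2}$, i.e. $|G/N|>|N|$, which the hypothesis $|G/N|\geq q$ does not guarantee (it fails whenever $q\leq |G/N|\leq q(q^2-1)/2$); the paper's aggregate bound still holds there, but only by appealing to exactly the factor of $2$ that you isolate. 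One caveat on your closing aside: the even-characteristic case (Theorem~\ref{theorem-S-congPSL(2,q)q-even}) carries no hypothesis on $|G/N|$ at all, so the auxiliary character of degree $q\pm1$ there is needed to handle small $|G/N|$, not only to compensate for the missing factor of $2$.
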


\begin{proof}
Write $N = S_1 \times \cdots \times S_k$ where $S_i\cong \PSL_2(q)$
for every $i=1,2,...,k$. As before, we apply
Lemma~\ref{lemma-extension} to have a bijective map $\beta\mapsto
\beta\chi$ from $\Irr(G/N)$ to the set of irreducible characters of
$G$ lying above $\St_{S_1}\times\St_{S_2}\times\cdots\times
\St_{S_k}\in \Irr(N)$, where $\chi$ is an extension of
$\St_{S_1}\times\St_{S_2}\times\cdots\times \St_{S_k}$ to $G$. The
case $b(G)=\chi(1)b(G/N)=q^kb(G/N)$ can be argued as before by using
Lemma~\ref{lemma-N-G/N}. So we may assume that $b(G)>q^kb(G/N)$.
Equivalently, every irreducible character of $G$ lying above
$\St_{S_1}\times\St_{S_2}\times\cdots\times \St_{S_k}\in \Irr(N)$
has degree smaller than $b(G)$. It follows in particular that
\begin{equation}\label{equation 3}b(G)e=|G|-b(G)^2\geq
q^{2k}|G/N|.
\end{equation}

Let $M:=S_2\times\cdots\times S_k$. Let $T: = \bN_G (M)$, so $|G:T|
= k$. Furthermore $M$ can be considered as a subgroup of
$T/\bC_G(M)$, which in turn is isomorphic to a subgroup of
$\Aut(M)\cong\Aut(\PSL_2(q))\wr \Sy_{k-1}$. Using
\cite[Lemma~1.3]{Mattarei}, we have that
$\St_{S_2}\times\cdots\times \St_{S_k}\in\Irr(M)$ is extendible to
$\Aut(M)$, and hence is extendible to $T/\bC_G(M)$. It follows that
$\St_{S_2}\times\cdots\times \St_{S_k}\in\Irr(M)$ is extended to an
irreducible character of $T$ whose kernel contains $\bC_G(M)$. Now
since $S_1\subseteq \bC_G(M)$, we conclude that the character
$1_{S_1}\times \St_{S_2}\times....\times \St_{S_k}\in\Irr(N)$ is
extendible to $T$. Assume that $\chi_1$ is an extension of
$1_{S_1}\times \St_{S_2}\times....\times \St_{S_k}$ to $T$.

Observe that the stabilizer of $1_{S_1}\times
\St_{S_2}\times....\times \St_{S_k}$ normalizes $M$, and
$1_{S_1}\times \St_{S_2}\times....\times \St_{S_k}$ has exactly $k$
conjugates under the action of $G$. Thus, $T$ must be the stabilizer
of $1_{S_1}\times \St_{S_2}\times....\times \St_{S_k}$ in $G$.

Now we apply Gallagher's theorem to obtain a bijection
$\beta_1\mapsto \beta_1\chi_1$ between $\Irr(T/N)$ and the set of
irreducible characters of $T$ lying above $1_{S_1}\times
\St_{S_2}\times....\times \St_{S_k}\in \Irr(N)$. Moreover, by
Clifford's theorem, each irreducible character of $T$ lying above
$1_{S_1}\times \St_{S_2}\times....\times \St_{S_k}\in \Irr(N)$
induces irreducibly to $G$. Therefore, the map $\beta_1\mapsto
(\beta_1\chi_1)^G$ is a bijection between $\Irr(T/N)$ and the set of
irreducible characters of $G$ lying above $1_{S_1}\times
\St_{S_2}\times....\times \St_{S_k}\in \Irr(N)$. We note that
\[(\beta_1\chi_1)^G(1)=|G:T|\chi_1(1)\beta_1(1)=kq^{k-1}\beta_1(1)\] and
\[kq^{k-1}\beta_1(1)\leq kq^{k-1}b(T/N)\leq kq^{k-1}|T/N|^{1/2}=k^{1/2}q^{k-1}|G/N|^{1/2}.\]

If $b(G)=kq^{k-1}b(T/N)$ then it follows that
\[b(G)^{3/2}+b(G)\leq k^{3/2}q^{3(k-1)/2}|G/N|^3/4+k^{1/2}q^{k-1}|G/N|^{1/2}.\]
Using the hypothesis $|G/N|\geq q^k$, one can easily check that
\[k^{3/2}q^{3(k-1)/2}|G/N|^3/4+k^{1/2}q^{k-1}|G/N|^{1/2}<q^{2k}|G/N|\]
and therefore we have
\[b(G)^{3/2}+b(G)<q^{2k}|G/N|.\] This and (\ref{equation 3}) imply
that $b(G)^{3/2}+b(G)<b(G)e$. As in the proof of
Theorem~\ref{theorem-S-ncongPSL(2,q)}, we deduce that $|G|<e^4-e^3$
as required.

So from now on to the end of the proof we assume that
$b(G)>kq^{k-1}b(T/N)$. In other words, the irreducible characters of
$G$ of the form $(\beta_1\chi_1)^G$ where $\beta_1\in\Irr(T/N)$ all
have degree smaller than $b(G)$. Recall from the second paragraph
that all irreducible characters of $G$ lying above
$\St_{S_1}\times\cdots\times \St_{S_k}$ also have degree smaller
than $b(G)$. Therefore we obtain
\begin{align*}b(G)e&\geq \sum_{\beta\in\Irr(G/N)}\beta(1)^2\chi(1)^2+\sum_{\beta_1\in\Irr(T/N)}((\beta_1\chi_1)^G(1))^2\\
&= q^{2k}|G/N|+k^2q^{2(k-1)}|T/N|\\
&=q^{2k}|G/N|+kq^{2(k-1)}|G/N|.
\end{align*}

Using the hypothesis that $|G/N|\geq q^k$ and the fact that
$|N|=|\PSL_2(q)|^k<q^{3k}$, we easily check that
\[q^{2k}|G/N|>|G|^{3/4}\] and
\[kq^{2(k-1)}|G/N|>|G|^{1/2}.\] Therefore we deduce that
$b(G)e>|G|^{3/4}+|G|^{1/2}$. Since $b(G)\leq |G|^{1/2}$, it follows
that $b(G)e>b(G)^{3/2}+b(G)$ and the theorem follows as before.
\end{proof}

Unlike the groups in even characteristic, $\PSL_2(q)$ with odd $q$
may have the Steinberg character as the only one that is extendible
to $\Aut(\PSL_2(q)).$ According to \cite[p.~8]{White}, when $q$ is
odd, $\PSL_2(q)$ has the following irreducible characters:
\begin{enumerate}
\item[(i)] $1_{\PSL_2(q)}$ of degree 1,
\item[(ii)] $\St_{\PSL_2(q)}$ of degree $q$,
\item[(iii)] $\chi_i, 1\leq i\leq (q-3)/2$ and $i$ even, of degree $q+1$,
\item[(iv)] $\theta_j, 1\leq j\leq (q-1)/2$ and $j$ even, of degree
$q-1$,
\item[(v)] $\xi_1$ and $\xi_2$ of degree $(q+1)/2$, if $q\equiv
1(\bmod~4)$, and
\item[(vi)] $\eta_1$ and $\eta_2$ of degree $(q-1)/2$, if $q\equiv
-1(\bmod~4)$.
\end{enumerate}

Let $q=p^f$ where $p$ is an odd prime. Let $\varphi$ be the field
automorphism of order $f$ of $\PSL_2(q)$ and $\delta$ be the
diagonal automorphism of order $2$ of $\PSL_2(q)$. Then, by
\cite[Lemma~4.8]{White}, the character $\chi_i\in\Irr(\PSL_2(q))$ is
invariant under $\varphi^k$ where $1\leq k\leq f$ if and only if
$(p^f-1)\mid i(p^k-1)$ or $(p^f-1)\mid i(p^k+1)$; and the character
$\theta_j\in\Irr(\PSL_2(q))$ is invariant under $\varphi^k$ if and
only if $(p^f+1)\mid j(p^k-1)$ or $(p^f+1)\mid j(p^k+1)$. Contrary
to the even characteristic case, we now show that $\PSL_2(p^f)$ has
an irreducible character of degree $q-1$ whose stabilizer in
$\Aut(\PSL_2(q))$ is $\PGL_2(q)$, which is as small as possible.

\begin{lemma}\label{lemma-Stab}
Let $q=p^f\geq 5$ be an odd prime power and let $\theta_2$ be
defined as above. Then
\[\Stab_{\Aut(\PSL_2(q))}(\theta_2)=\PGL_2(q).\]
\end{lemma}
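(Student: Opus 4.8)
The plan is to compute $\Stab := \Stab_{\Aut(\PSL_2(q))}(\theta_2)$ by passing to the outer automorphism group. Since inner automorphisms fix every irreducible character, $\Stab$ contains $\Inn(\PSL_2(q))\cong\PSL_2(q)$ and so corresponds to a subgroup $H$ of $\Out(\PSL_2(q))=\langle\bar\delta\rangle\times\langle\bar\varphi\rangle\cong C_2\times C_f$, where $\bar\delta,\bar\varphi$ are the images of the diagonal and field automorphisms and $\Aut(\PSL_2(q))=\PGL_2(q)\rtimes\langle\varphi\rangle$. Because $\PGL_2(q)=\PSL_2(q)\langle\delta\rangle$, proving the lemma amounts to showing $H=\langle\bar\delta\rangle$; that is, $\delta$ fixes $\theta_2$ while no coset involving a nontrivial power of $\varphi$ does.

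First I would show $\delta$ fixes $\theta_2$, equivalently that $\theta_2$ extends to $\PGL_2(q)$. The point is that White's criterion quoted above only governs the field automorphism, so the diagonal automorphism needs an independent argument. Here I would invoke Clifford theory for the index-$2$ subgroup $\PSL_2(q)\trianglelefteq\PGL_2(q)$: if $\delta$ did not fix $\theta_2$, then its orbit under $\PGL_2(q)$ would have size $2$, so $\theta_2$ would induce irreducibly to $\PGL_2(q)$ and yield an irreducible character of degree $2\theta_2(1)=2(q-1)$. But every character degree of $\PGL_2(q)$ lies in $\{1,q-1,q,q+1\}$, and $2(q-1)>q+1$ for $q\geq 5$; this contradiction forces $\theta_2$ to be $\delta$-invariant, so $\PGL_2(q)\subseteq\Stab$ and $\bar\delta\in H$.

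Next I would rule out the field automorphisms. Applying White's criterion with $j=2$, the character $\theta_2$ is invariant under $\varphi^k$ only if $(q+1)\mid 2(p^k-1)$ or $(q+1)\mid 2(p^k+1)$. For $1\leq k\leq f-1$ one has $p^k\leq q/p\leq q/3$ since $p\geq 3$, so both $2(p^k-1)$ and $2(p^k+1)$ are strictly positive and strictly smaller than $q+1$ once $q\geq 5$; hence neither divisibility can hold, and $\theta_2$ is not $\varphi^k$-invariant. Thus $\bar\varphi^k\notin H$ for $1\leq k\leq f-1$.

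Finally I would combine the two steps. Since $\bar\delta\in H$, any element $\bar\delta^{a}\bar\varphi^{k}\in H$ with $1\leq k\leq f-1$ would give $\bar\varphi^{k}=\bar\delta^{-a}(\bar\delta^{a}\bar\varphi^{k})\in H$, contradicting the previous step (and for $f=1$ the range is empty, so the conclusion is immediate). Hence every element of $H$ has trivial $\varphi$-component, so $H\subseteq\langle\bar\delta\rangle$; together with $\bar\delta\in H$ this gives $H=\langle\bar\delta\rangle$, i.e. $\Stab=\PGL_2(q)$. The main obstacle is the diagonal automorphism: unlike the field automorphism it is not covered by the invariance criterion, and the crux is the observation that hypothetical non-invariance of $\theta_2$ would manufacture a degree-$2(q-1)$ character of $\PGL_2(q)$, which cannot exist.
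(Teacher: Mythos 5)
Your proof is correct and follows essentially the same route as the paper: White's invariance criterion (Lemma~4.8 of \cite{White}) rules out $\varphi^k$ for $1\leq k<f$, the diagonal automorphism $\delta$ is shown to fix $\theta_2$, and the coset argument pins the stabilizer down to $\PGL_2(q)$. The only difference is that where the paper cites the $\delta$-invariance of the degree-$(q\pm 1)$ characters as well known, you derive it yourself via Clifford theory from the fact that $\PGL_2(q)$ has no irreducible character of degree $2(q-1)$ when $q\geq 5$ --- a self-contained justification of the same fact rather than a different approach.
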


\begin{proof}
We observe that $(p^f+1)\mid 2(p^k-1)$ or $(p^f+1)\mid 2(p^k+1)$ if
and only if $k=f$. That means $\theta_2\in\Irr(\PSL_2(q))$ is not
invariant under $\varphi^k$ for every $1\leq k<f$. It is well known
that every irreducible character of $\PSL_2(q)$ of degree $q\pm 1$
is invariant under the diagonal automorphism $\delta$. Therefore
\[\Stab_{\Aut(\PSL_2(q))}(\theta_2)=\PSL_2(q)\rtimes \langle\delta
\rangle=\PGL_2(q),\] as claimed.
\end{proof}

\begin{theorem}\label{theorem-S-congPSL(2,q)2}
Assume that $N=\PSL_2(q)\times\cdots\times \PSL_2(q)$, a direct
product of $k$ copies of $\PSL_2(q)$ with $q\geq5$, is a minimal
normal subgroup of a finite group $G$ such that $|G/N|< q^k$. Let
$|G|=b(G)(b(G)+e)$. Then $e>\sqrt{b(G)}+1$ and, in particular, $|G|<
e^4-e^3$.
\end{theorem}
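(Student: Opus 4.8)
The plan is to follow the template of Theorem~\ref{theorem-S-congPSL(2,q)1}: exhibit a large collection of irreducible characters of $G$, all of degree strictly below $b(G)$, whose squared degrees sum to more than $b(G)^{3/2}+b(G)$. Since $b(G)e=|G|-b(G)^2$, this is exactly the inequality $e>\sqrt{b(G)}+1$, from which $b(G)<e^2-e$ and hence $|G|=b(G)(b(G)+e)<e^4-e^3$ follow as in the previous proofs. As a first input, the Steinberg character extends to $\Aut(\PSL_2(q))$, so by Lemma~\ref{lemma-extension} the product $\St\times\cdots\times\St$ extends to $G$ and Gallagher's theorem gives the usual bijection with $\Irr(G/N)$; if its top degree $q^kb(G/N)$ equals $b(G)$ we are done by Lemma~\ref{lemma-N-G/N}, and otherwise these characters contribute $b(G)e\ge q^{2k}|G/N|$.

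The whole difficulty of the hypothesis $|G/N|<q^k$ is that this Steinberg contribution is \emph{not} enough: using $b(G)\le(q+1)^k|G/N|$ from Lemma~\ref{lemma-b(G)} and $|G/N|<q^k$, the best lower bound one gets for the ratio $q^{2k}|G/N|/b(G)^{3/2}$ is the borderline factor $\bigl(q/(q+1)\bigr)^{3k/2}$, which is $<1$. I would therefore feed in the many characters of $\PSL_2(q)$ of degree $q+1$ (and $q-1$). The computational engine is the general Clifford-theoretic identity: for any $\psi\in\Irr(N)$ with inertia group $I_\psi=I_G(\psi)$,
\[\sum_{\chi\in\Irr(G\mid\psi)}\chi(1)^2=\psi(1)^2\,[G:I_\psi]\,|G:N|,\]
which needs no extendibility (it comes from $\sum_{\eta\in\Irr(I_\psi\mid\psi)}\eta(1)^2=\psi(1)^2\,[I_\psi:N]$ followed by induction). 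Summing this over all product characters $\psi=\chi_{i_1}\times\cdots\times\chi_{i_k}$ whose factors all have degree $q+1$ --- a set stable under $G$, since automorphisms preserve degrees and $G$ merely permutes the factors --- and using that $[G:I_\psi]$ is the size of the $G$-orbit of $\psi$, the right-hand sides telescope to
\[\sum_{\substack{\chi\in\Irr(G)\\ \text{all }N\text{-constituents of degree }(q+1)^k}}\chi(1)^2=(q+1)^{2k}\,a_+^k\,|G/N|,\]
where $a_+$ is the number of degree-$(q+1)$ characters of $\PSL_2(q)$, read off from White's list as $a_+\approx(q-3)/4$.

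Granting (via a dichotomy exactly as for the Steinberg family: either the top character of the family equals $b(G)$, handled by an estimate of Lemma~\ref{lemma-N-G/N} type, or every family member lies below $b(G)$) that these characters all have degree $<b(G)$, I obtain $b(G)e\ge(q+1)^{2k}a_+^k|G/N|$. With $a_+\approx(q-3)/4$ and $|N|=(q(q^2-1)/2)^k<q^{3k}$, this family sum is a fixed fraction $\rho^k$ of $|G|$, where $\rho=(q+1)(q-3)/(2q(q-1))\ge 2/5$ for $q\ge 9$; since $|G|\ge|N|\ge(q(q^2-1)/2)^k$ grows geometrically much faster than $\rho^{-4k}$, one checks $\rho^k|G|>|G|^{3/4}+|G|^{1/2}$, and with $b(G)\le|G|^{1/2}$ this gives $b(G)e>b(G)^{3/2}+b(G)$, whence $e>\sqrt{b(G)}+1$ and the theorem follows.

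The step I expect to be the main obstacle is twofold. First, the small fields $q=5$ and $q=7$, where $\PSL_2(q)$ has only one character of each of the degrees $q\pm1$ (so $a_+=1$ and the degree-$(q+1)$ family no longer dominates): here one must additionally sum the families of degrees $q-1$, $q$, and $(q\pm1)/2$ from White's list and verify the resulting numerical inequality for these finitely many fields directly. Second, the clean dichotomy ensuring the counted characters fall below $b(G)$ requires controlling the inertia groups $I_\psi$ and the extendibility of the $\psi$ to them; this is precisely where Lemma~\ref{lemma-Stab} enters, since it pins down $\Stab_{\Aut(\PSL_2(q))}(\theta_2)=\PGL_2(q)$ and, together with a wreath-product argument in the spirit of \cite[Lemma~1.3]{Mattarei}, identifies $I_G(\theta_2\times\cdots\times\theta_2)$ and yields the needed extensions so that Gallagher's theorem and Clifford induction apply orbit by orbit.
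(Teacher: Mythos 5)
Your counting identity is correct --- for $\psi\in\Irr(N)$ with inertia group $I_\psi$ one indeed has $\sum_{\chi\in\Irr(G\mid\psi)}\chi(1)^2=\psi(1)^2\,[G:I_\psi]\,[G:N]$ with no extendibility hypothesis, and the telescoping over the $G$-stable set of degree-$(q+1)^k$ product characters is valid; this is a genuinely nicer device than the paper's, which needs Lemma~\ref{lemma-Stab}, extendibility to $\PGL_2(q)\wr\Sy_k$, and Gallagher--Clifford parametrizations. The genuine gap is the ``bad'' half of your dichotomy: the case where some character lying over the degree-$(q+1)$ family attains $b(G)$. You assert this is ``handled by an estimate of Lemma~\ref{lemma-N-G/N} type,'' but Lemma~\ref{lemma-N-G/N} requires $b(G)\le b(N)b(G/N)$, and a family member attaining $b(G)$ gives no such bound: its degree is $[G:I_\psi]\,e\,(q+1)^k$ with $e^2\le[I_\psi:N]$, and $[G:I_\psi]\,e$ can greatly exceed $b(G/N)$ (if $I_\psi=N$ the degree is $|G/N|(q+1)^k$ while $b(G/N)$ may equal $1$). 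All you can extract in this case is $b(G)\le(q+1)^k|G/N|$, which is exactly Lemma~\ref{lemma-b(G)} and holds unconditionally, so the dichotomy has bought nothing; and it is quantitatively too weak: combined with the Steinberg contribution $b(G)e\ge q^{2k}|G/N|$ and $|G/N|<q^k$, closing the argument would force $q^{2k}\ge(q+1)^{3k/2}q^{k/2}$, i.e.\ $q^3\ge(q+1)^3$, which is false. This is precisely why the paper builds its second family on the degree-$(q-1)$ character $\theta_2$ rather than on the degree-$(q+1)$ characters: in its equality case it gets $b(G)\le(q-1)^k|G/N|$, and the corresponding requirement $q^{2k}>(q-1)^{3k/2}q^{k/2}$ does hold. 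Your closing appeal to Lemma~\ref{lemma-Stab} cannot repair this, since that lemma concerns $\theta_2$ only and says nothing about the inertia groups of the $\chi_i$ of degree $q+1$.

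The gap is repairable inside your framework, in two ways. (i) Run your identity on the degree-$(q-1)$ family instead: there are $\lfloor(q-1)/4\rfloor\ge1$ such characters for every odd $q\ge5$, the good-case sum $(q-1)^{2k}\lfloor(q-1)/4\rfloor^k|G/N|$ is still a large enough fraction of $|G|$, and the bad case now yields $b(G)\le(q-1)^k|G/N|$, which suffices. This also repairs your small-$q$ discussion, where your counts are off: $a_+=\lfloor(q-3)/4\rfloor$ equals $0$ for $q=5$ (not $1$) and $1$ for $q\in\{7,9\}$, whereas the degree-$(q-1)$ family is nonempty for all $q\ge5$. (ii) Keep the $(q+1)$-family but, in its bad case, use both halves of Lemma~\ref{lemma-b(G)}: when $|G/N|\le|N|/(q+1)^{2k}$ use $b(G)\le(q+1)^k|G/N|$, and otherwise use $b(G)\le|G|^{1/2}$; a short computation shows either branch gives $q^{2k}|G/N|>b(G)^{3/2}+b(G)$. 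It is worth noting that this same two-bound interpolation is what the paper's own Case~2 tacitly requires: as printed, its inequality $q^{2k}|G/N|+(q-1)^{2k}|G/N|>(q+1)^{3k/2}|G/N|^{3/2}+(q+1)^k|G/N|$ fails, for instance, at $q=5$, $k=2$, $|G/N|=24$. So the corner you glossed over is exactly the delicate point of this theorem, and as written your proposal leaves it open.
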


\begin{proof}
Arguing as in the proof of Theorem~\ref{theorem-S-congPSL(2,q)1}, we
can assume that every irreducible character of $G$ lying above
$\St_{S_1}\times\St_{S_2}\times\cdots\times \St_{S_k}\in \Irr(N)$
has degree smaller than $b(G)$.

By Lemma~\ref{lemma-Stab}, we have
$\Stab_{\Aut(\PSL_2(q))}(\theta_2)=\PGL_2(q)$. Let
$\psi:=\theta_2\times\cdots\times \theta_2\in\Irr(N)$. Then we have
$\Stab_{\Aut(N)}(\psi)=\PGL_2(q)\wr \Sy_{k}$. Set
$\overline{H}:=\PGL_2(q)\wr \Sy_{k}$.

Consider $N$ as a subgroup of $G/\bC_G(N)$, which in turn can be
considered as a subgroup of $\Aut(N)$. Then the stabilizer of $\psi$
in $G/\bC_G(N)$ is $\overline{H}\cap G/\bC_G(N)$. Let $H$ be the
preimage of $\overline{H}\cap G/\bC_G(N)$ in $G$. Then we have
$\Stab_G(\psi)=H$.

Recall that $\PGL_2(q)=\PSL_2(q)\rtimes \langle \delta\rangle$ where
$\delta$ the diagonal automorphism of degree $2$ of $\PSL_2(q)$.
Therefore $\theta$ is extendible to $\PGL_2(q)$. Thus
$\psi\in\Irr(N)$ is extendible to $\overline{H}$ so that it is
extendible to $\overline{H}\cap G/\bC_G(N)$ as well. We deduce that
$\psi$ is extendible to $H$. Let $\chi$ be an extension of $\psi$ to
$H$.

The conclusions of the last two paragraphs, together with
Gallagher's theorem and Clifford's theorem, imply that $\beta\mapsto
(\beta\chi)^G$ is a bijection between $\Irr(H/N)$ and the set of
irreducible characters of $G$ lying above $\psi\in \Irr(N)$. Note
that
\[
(\beta\chi)^G(1)=\beta(1)\chi(1)|G/H|=(q-1)^k\beta(1)|G/H|.
\]

We come up with two cases:

\medskip

\textbf{Case} $b(G)=(q-1)^kb(H/N)|G/H|$: Then we have $b(G)\leq
(q-1)^|G/N|$. Recall that every irreducible character of $G$ lying
above $\St_{S_1}\times\St_{S_2}\times\cdots\times \St_{S_k}\in
\Irr(N)$ has degree smaller than $b(G)$. Therefore $b(G)e\geq
q^{2k}|G/N|$. This and the inequality $b(G)\leq (q-1)^|G/N|$,
together with the hypothesis that $|G/N|<q^k$ imply that
$b(G)e>b(G)^{3/2}+b(G)$, and we are done as before.

\medskip

\textbf{Case} $b(G)>(q-1)^kb(H/N)|G/H|$: Then every irreducible
character of $G$ of the form $(\beta\chi)^G$ where
$\beta\in\Irr(H/N)$ has degree smaller than $b(G)$. Therefore
\begin{align*}b(G)e&\geq q^{2k}|G/N|+\sum_{\beta\in\Irr(H/N)}((\beta\chi)^G(1))^2\\
&= q^{2k}|G/N|+(q-1)^{2k}|H/N||G/H|^2\\
&\geq q^{2k}|G/N|+(q-1)^{2k}|G/N|.
\end{align*}

Using $|G/N|<q^k$, we can check that
\[q^{2k}|G/N|+(q-1)^{2k}|G/N|> (q+1)^{3k/2}|G/N|^{3/2}+(q+1)^{k}|G/N|.\]
It follows from Lemma~\ref{lemma-b(G)} that
\[q^{2k}|G/N|+(q-1)^{2k}|G/N|> b(G)^{3/2}+b(G).\]
This and the above inequality $b(G)e\geq
q^{2k}|G/N|+(q-1)^{2k}|G/N|$ imply that $b(G)e> b(G)^{3/2}+b(G)$,
which in turn implies that $b(G)<e^2-e$ and the theorem follows.
\end{proof}

Theorems~\ref{theorem-main} and \ref{theorem-main-3} now are
consequences of Theorems~\ref{theorem-S-ncongPSL(2,q)},
\ref{theorem-S-congPSL(2,q)q-even}, \ref{theorem-S-congPSL(2,q)1},
and~\ref{theorem-S-congPSL(2,q)2}.


\section{Groups with $|G| = e^4 - e^3$}\label{section: final}

In this section, we characterize those groups that satisfy the
condition $|G| = e^4 - e^3$.  To do this, we need to introduce
another class of groups.

An irreducible character $\chi$ of a finite group $G$ is said to be
a \emph{Gagola character} if it vanishes on all but two conjugacy
classes of $G$. Groups with such a character have been studied in
great depth by Gagola in \cite{Gagola}. In particular, if $G$ has a
Gagola character, then $G$ has a unique minimal normal subgroup $N$,
which is necessarily elementary abelian. Furthermore, $\chi$
vanishes on all the elements in $G \setminus N$ and that $\chi$ is
the unique irreducible character of $G$ whose kernel does not
contain $N$. In this situation, for simplicity we will say that $G$
is a \emph{Gagola group} and $(G,N)$ is a \emph{Gagola pair}.

The following lemma shows the connection between groups in
consideration and Gagola groups.

\begin{lemma}\label{lemma-Lewis}
\begin{enumerate}
\item[(i)] Let $G$ be a finite group with a nontrivial abelian normal subgroup, and
let $|G| = d(d+e)$ where $d$ is a character degree of $G$ and $e
> 1$ is an integer. If $d\geq e^2-e$ then $G$ has a Gagola character
$\chi\in\Irr(G)$ of degree $d$.

\item[(ii)] Let $(G,N)$ be a Gagola pair with the associated Gagola character of degree $d$. Let $p$ be the only prime
divisor of $|N|$ and $P$ a Sylow $p$-subgroup of $G$. Then
$|P:N|=e^2$ and $d=e(|N|-1)$.
\end{enumerate}
\end{lemma}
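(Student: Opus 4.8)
The plan is to prove each part of Lemma~\ref{lemma-Lewis} using the structural theory of Gagola groups developed in \cite{Gagola}, combined with the arithmetic relation $|G|=d(d+e)$.

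For part (i), the strategy is to invoke the characterization of Gagola characters as those vanishing on all but two classes. I would first recall from \cite[Theorem~1]{Lewis} (or the discussion preceding it) that a group $G$ with a nontrivial abelian normal subgroup satisfies $|G|\leq e^4-e^3$. The key observation is that the bound $d\geq e^2-e$ forces equality-type rigidity: writing $|G|=d(d+e)$ and using $d\geq e^2-e=e(e-1)$, one obtains $|G|=d(d+e)\geq e(e-1)\cdot e(e-1)+e(e-1)\cdot e = e^2(e-1)^2+e^2(e-1)=e^2(e-1)e=e^4-e^3$. Combined with the upper bound $|G|\leq e^4-e^3$ this forces $d=e^2-e$ exactly and $|G|=e^4-e^3$. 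The heart of the argument is then to show that this extremal situation is precisely the one producing a Gagola character, which should follow by tracing through the proof of the bound in \cite{Lewis}: the inequality used there must be tight, and tightness in the relevant character-theoretic sum forces $\chi$ to vanish off a unique minimal normal subgroup. I would cite \cite[Theorem~1]{Lewis} and its proof directly for this implication, as the main obstacle is extracting the vanishing condition from the extremality rather than re-deriving it.

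For part (ii), I would work purely within the established structure of a Gagola pair $(G,N)$. Since $\chi$ vanishes on $G\setminus N$, the second orthogonality relation or the standard computation of $\sum_g|\chi(g)|^2=|G|$ restricts to a sum over $N$, giving $|G|=\sum_{n\in N}|\chi(n)|^2$. Because $\chi$ restricted to $N$ is a sum of $d$ linear characters (as $N$ is abelian and central in its action relative to $\chi$), and $\chi$ is the unique irreducible character whose kernel omits $N$, the restriction $\chi_N$ decomposes as $d/(|N|-1)$ copies of each nontrivial linear character of $N$ (the trivial one being absent). This is the standard Gagola structure: $d=e(|N|-1)$ should emerge from identifying $e=d/(|N|-1)$ with the common multiplicity, and I would verify $\chi(1)=d$ is divisible by $|N|-1$ with quotient $e$ by matching the class-sum computation to $|G|=d(d+e)$.

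To finish part (ii), I would compute $|P:N|$. Writing $|N|=p^a$, the relation $d=e(|N|-1)$ together with $|G|=d(d+e)$ yields $|G|=e(p^a-1)\cdot(e(p^a-1)+e)=e^2(p^a-1)p^a=e^2p^a(p^a-1)$. The $p$-part of $|G|$ is then $|G|_p=p^a\cdot(e^2)_p$ since $p^a-1$ is coprime to $p$; on the other hand $|P|=|G|_p$ and $|P:N|=|G|_p/p^a=(e^2)_p$. The cleanest route to $|P:N|=e^2$ exactly (not just its $p$-part) is to show $e$ is a power of $p$, which follows because in a Gagola group the quotient $G/N$ acts with all character degrees being $p$-powers times the degree of $\chi$ on $N$; I expect the main obstacle here to be justifying that $e$ itself is a $p$-power, which I would handle by noting that $d=\chi(1)$ divides $|G|$ and the Gagola structure forces $\chi(1)_p=\chi(1)$ up to the factor $(|N|-1)$, making $e=d/(|N|-1)$ a $p$-power. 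Once $e$ is a $p$-power, $(e^2)_p=e^2$ and $|P:N|=e^2$ follows immediately.
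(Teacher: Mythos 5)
The paper's own proof of this lemma is a one-line citation to \cite[Lemmas~2.1 and~2.2]{Lewis}, so your proposal must stand on its own as a reconstruction of Lewis's arguments, and it has genuine gaps in both parts. For part (i), your argument is circular. You invoke Lewis's Theorem~1.1 (that a group with a nontrivial abelian normal subgroup satisfies $|G|\leq e^4-e^3$, equivalently $d\leq e^2-e$) to force the extremal situation $d=e^2-e$, and then propose to extract the Gagola character ``by tracing through the proof of the bound in \cite{Lewis}'' and appealing to tightness. But in Lewis's paper the logical order is the reverse: Lemma~2.1 (which is exactly part (i)) is proved first, by a direct character-theoretic argument using Clifford theory over the abelian normal subgroup and orthogonality relations, and Theorem~1.1 is then \emph{deduced} from Lemmas~2.1, 2.2 and Gagola's structure theory. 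So the theorem you cite sits downstream of the statement you are trying to prove, and the step you defer --- that $d\geq e^2-e$ forces $\chi$ to vanish on all but two conjugacy classes --- is precisely the content of the lemma; no actual argument for it appears anywhere in your proposal.

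For part (ii), the derivation of $d=e(|N|-1)$ is essentially sound: $\chi_N=e'\sum_{1\neq\lambda\in\Irr(N)}\lambda$ (every nontrivial $\lambda$ must lie under $\chi$ since $\chi$ is the unique irreducible character whose kernel does not contain $N$), and comparing $|G|=\sum_{n\in N}|\chi(n)|^2=d^2+(|N|-1)e'^2$ with $|G|=d^2+de$ gives $e'=e$ and $d=e(|N|-1)$. The gap is in the second half. You correctly reduce $|P:N|=e^2$ to showing that $e$ is a power of $p$, but your justification of that reduction step is vacuous: the divisibility $d\mid |G|$ gives nothing, since $e(p^a-1)$ divides $e^2p^a(p^a-1)$ automatically whatever $e$ is, and the phrase ``the Gagola structure forces $\chi(1)_p=\chi(1)$ up to the factor $(|N|-1)$'' is a restatement of the claim, not a proof of it. That $e$ is a $p$-power --- equivalently, Gagola's result that $|G:P|=|N|-1$, which this paper itself cites in the proof of Theorem~\ref{equal} from Lemma~2.1 and Corollary~2.3 of \cite{Gagola} --- is a substantive structural theorem about Gagola pairs (it amounts to showing the inertia group in $G$ of a nontrivial linear character of $N$ is a $p$-group); it cannot be extracted from the arithmetic relation $|G|=d(d+e)$ alone. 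To repair the proposal you would either need to reproduce that structural argument or cite Gagola's results explicitly at this point.
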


\begin{proof}
This is \cite[Lemmas~2.1 and~2.2]{Lewis}.
\end{proof}

We can now characterize the groups $G$ with $|G| = e^4 - e^3$.

\begin{theorem} \label{equal}
Let $G$ be a finite group, and let $|G| = d(d+e)$ where $d > 1$ is a
character degree of $G$ and $e
> 1$ is an integer.  Then $|G| = e^4 - e^3$ if and only if $G$ has a
Gagola character of degree $d$ and a unique minimal normal subgroup
$N$ of order $e$.
\end{theorem}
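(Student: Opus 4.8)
The plan is to prove both implications by reducing everything to the two parts of Lemma~\ref{lemma-Lewis}, after first using Theorem~\ref{theorem-main} to pin down the relevant normal structure. Throughout I would exploit that the equation $d(d+e)=e^4-e^3$ determines $d$ completely: viewing it as a quadratic $d^2+ed-(e^4-e^3)=0$, the discriminant is $e^2+4(e^4-e^3)=e^2(2e-1)^2$, so its unique positive root is $d=e^2-e$. Hence on either side of the equivalence one may take $d=e^2-e$ and $d+e=e^2$ for free.

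For the forward direction, suppose $|G|=e^4-e^3$. Since $e>1$, Theorem~\ref{theorem-main} shows $G$ cannot possess a non-abelian minimal normal subgroup, as that would force the strict inequality $|G|<e^4-e^3$. Therefore every minimal normal subgroup of $G$ is abelian, and in particular $G$ has a nontrivial abelian normal subgroup. I would then apply Lemma~\ref{lemma-Lewis}(i): since $d=e^2-e$ satisfies $d\geq e^2-e$, the group $G$ carries a Gagola character $\chi\in\Irr(G)$ of degree $d$. By the structure theory of Gagola groups recalled above, $G$ then has a unique minimal normal subgroup $N$, which is elementary abelian, so that $(G,N)$ is a Gagola pair. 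Finally I invoke Lemma~\ref{lemma-Lewis}(ii), which gives $d=e(|N|-1)$; combined with $d=e^2-e=e(e-1)$ this yields $|N|-1=e-1$, i.e. $|N|=e$, as required.

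For the reverse direction, suppose $G$ has a Gagola character of degree $d$ and a unique minimal normal subgroup $N$ with $|N|=e$. Because possession of a Gagola character already forces the minimal normal subgroup to be unique, the pair $(G,N)$ is a Gagola pair with associated character of degree $d$. Lemma~\ref{lemma-Lewis}(ii) then gives $d=e(|N|-1)=e(e-1)=e^2-e$, whence $d+e=e^2$ and $|G|=d(d+e)=(e^2-e)e^2=e^4-e^3$, completing the equivalence.

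As for obstacles, there is little genuine difficulty here once Lemma~\ref{lemma-Lewis} and Theorem~\ref{theorem-main} are in hand: the argument is essentially bookkeeping around the identity $d=e^2-e$. The one point that warrants care is the forward direction's appeal to Theorem~\ref{theorem-main} --- one must keep the hypothesis $e>1$ in force (it underlies both the bound $|G|\leq e^4-e^3$ and its strict form), and one must legitimately identify the ``unique minimal normal subgroup'' supplied by Gagola's theorem with the subgroup $N$ whose order the statement constrains, which is immediate precisely because that uniqueness is guaranteed.
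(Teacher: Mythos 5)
Your proof is correct, and its skeleton matches the paper's: Theorem~\ref{theorem-main} rules out non-abelian minimal normal subgroups, Lemma~\ref{lemma-Lewis}(i) produces the Gagola character, and Lemma~\ref{lemma-Lewis}(ii) pins down $|N|$. Within that skeleton, however, you make two genuine simplifications. In the forward direction, the paper obtains $d = e^2 - e$ by citing Theorem~1.1 of \cite{Lewis} (available once a nontrivial abelian normal subgroup is known) to get $d \le e^2 - e$, and then ruling out strict inequality by a contradiction; you instead observe that $d(d+e) = e^4 - e^3$ is a quadratic in $d$ with discriminant $e^2 + 4(e^4-e^3) = e^2(2e-1)^2$, so $d = e^2 - e$ is forced outright, with no appeal to any external bound. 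In the reverse direction, the paper computes $|G| = |G:P|\,|P:N|\,|N|$ using $|P:N| = e^2$ from Lemma~\ref{lemma-Lewis}(ii) together with $|G:P| = |N| - 1$, the latter quoted from Gagola's paper (Lemma~2.1 and Corollary~2.3 of \cite{Gagola}); you instead use only the identity $d = e(|N|-1)$ from Lemma~\ref{lemma-Lewis}(ii) and the defining relation $|G| = d(d+e)$, which yields $|G| = (e^2-e)e^2$ directly. Both substitutions are sound --- the $e$ appearing in Lemma~\ref{lemma-Lewis}(ii) is the cofactor determined by $|G| = d(d+e)$, which coincides with your $e$ in both directions --- and they make the argument more self-contained by eliminating two external citations.
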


\begin{proof}
Suppose first that $G$ has a Gagola character of degree $d$ and the
unique minimal normal subgroup $N$ with $|N| = e$.  Let $p$ be the
unique prime divisor of $|N|$ and let $P$ be a Sylow $p$-subgroup of
$G$.  By Lemma~\ref{lemma-Lewis}(ii), we know that $e^2 = |P:N|$.
Furthermore, from Lemma~2.1 and Corollary~2.3 of \cite{Gagola}, we
have $|G:P| = |N| - 1$. Therefore,
$$
|G| = |G:P||P:N||N| = (|N| - 1)|P:N| |N| = (e-1)e^2 e = e^4 - e^3.
$$

Conversely, suppose that $|G| = e^4 - e^3$.  In view of
Theorem~\ref{theorem-main}, $G$ must have a nontrivial solvable
radical. In particular, $G$ has a nontrivial abelian normal
subgroup. Theorem~1.1 of \cite{Lewis} then implies that \[d \le e^2
- e.\] If $d < e^2 - e$, then \[|G| = d(d+e) < (e^2 - e)((e^2-e)+e)
= (e^2-e)e^2 = e^4 - e^3 = |G|,\] which is a contradiction.  Thus,
we must have $d = e^2 - e$. We then apply Lemma~\ref{lemma-Lewis}(i)
to see that $G$ has a Gagola character of degree $d$, and hence has
a unique minimal normal subgroup. Let $N$ be the unique minimal
normal subgroup of $G$. Applying Lemma~\ref{lemma-Lewis}(ii), we
deduce that $d = e (|N| - 1)$. Since $d = e^2 - e$, it follows that
$e (e-1) = e (|N| - 1)$, and we easily computes that $|N| = e$.
\end{proof}

The groups mentioned in the introduction are not the only Gagola
groups in the consideration of Theorem~\ref{equal}. Let us describe
here another family of such groups, which appeared in
\cite[p.~409]{Goldstein-Guralnick-Lewis-Moreto-Navarro-Tiep} in a
different context. These groups have normal Sylow $p$-subgroups,
where $p$ the the prime divisor of $|N|$.

Let $\FF$ be a field of order $q$ where $q$ is a power of some prime
$p$. Take
\[\displaystyle K := \left\{ \begin{pmatrix} 1 & a & b \\
                       0 & 1 & c \\
                       0 & 0 & d  \end{pmatrix} : a, b, c \in \FF; d \in \FF^*
                       \right\}.\]
Let ${\mathcal G}:=\Gal(\FF/\FF_p)$ be the Galois group for $\FF$
over the subfield $\FF_p$ of order $p$. We define an action
${\mathcal G}$ on $K$ as follows: if $\sigma \in {\mathcal G}$, then
$\sigma$ acts on a typical element of $K$ by acting on each of the
entries of $K$. Let
\[\displaystyle P :=\left\{
 \begin{pmatrix}  1 & a & b \\
                  0 & 1 & c \\
                  0 & 0 & 1  \end{pmatrix} \mid a,b,c \in \FF
                  \right\},\]
and \[\displaystyle L:=\left\{
 \begin{pmatrix}  1 & 0 & 0 \\
                  0 & 1 & 0 \\
                  0 & 0 & d  \end{pmatrix} \mid d \in \FF^*
                  \right\}.\]
It is not difficult to see that $P$ is an ultraspecial group of
order $q^3$ and $L$ is a cyclic group of order $q - 1$.  Notice that
$P$ and $L$ are invariant under the action of ${\mathcal G}$.
Furthermore, the semi-direct product of ${\mathcal G}$ acting on $L$
is isomorphic to the affine group on $\FF$. Let $\Gamma$ be the
semi-direct product of ${\mathcal G}$ acting on $K$. (We note that
$\bZ(P) L {\mathcal G}$ is isomorphic to the affine semi-linear
group on $\FF$, which has been discussed on
\cite[p.~38]{Manz-Wolf}.)

Suppose $D = NH^*$ is a two-transitive Frobenius group of Dickson
type of order $p^n (p^n - 1)$, where $N$ is the Frobenius kernel and
$H^*$ is the Frobenius complement. It is well-known that $H^*$ can
be embedded in the affine group of $\FF$ and that $NH^*$ is
isomorphic to a subgroup of the semi-linear affine group of $\FF$.
Thus, $H^*$ is isomorphic to $H \subseteq L{\mathcal G}\subset
\Gamma$ and $NH$ is isomorphic to $\bZ(P) H$.  We set $G: = PH$, and
it is not difficult to see that $G$ is a Gagola group with the
desired properties.

A family of non-$p$-closed examples can be found in
\cite[Theorem~3.3]{Lewis2} for every prime $p$. These groups were
constructed as subgroups of index $p$ of the group $\Gamma$ defined
above when $q = p^p$. Two other non-$p$-closed examples can be found
in \cite[pp.~383-384]{Gagola}.  The first of these has a subgroup
$S$ of order $12$ obtained by taking a cyclic group of order $4$
acting on a group of order $3$ by inverting the nontrivial elements
and then having $S$ act on the direct product of two cyclic groups
of order $4$.  The second one has a subgroup $T$ which is the direct
product of a cyclic group of order $4$ and the semi-direct product
of a cyclic group of order $9$ acting nontrivially on the quaternion
group of order $8$. The desired group is then obtained by having $T$
act on the direct product of two cyclic groups of order $9$.  We
refer the interested reader to \cite{Gagola} for detailed
constructions of these groups.

It seems nontrivial to us to obtain a complete classification of
those groups that satisfy the extremal condition $|G|=e^4-e^3$. It
is likely that these groups are necessarily solvable, but we are not
able to confirm it at this time.



\end{document}